\documentclass[reqno]{amsart}
\usepackage{amsmath,amssymb,latexsym, url,newtxtext,newtxmath}
\usepackage[margin=1in]{geometry}
\usepackage{enumitem}
\usepackage[colorlinks=true,linkcolor=blue,citecolor=blue,urlcolor=blue]{hyperref}

\newtheorem{theorem}{Theorem}[section]
\newtheorem{lemma}[theorem]{Lemma}
\newtheorem{proposition}[theorem]{Proposition}

\theoremstyle{definition}
\newtheorem{definition}[theorem]{Definition}
\newtheorem{remark}[theorem]{Remark}

\DeclareMathOperator{\SO}{SO} 
\DeclareMathOperator{\tr}{tr}
\DeclareMathOperator{\dist}{dist}
\DeclareMathOperator{\diag}{diag}
\DeclareMathOperator{\dbsup}{db\text{-}sup}
\newcommand{\R}{\mathbb{R}}
\newcommand{\Z}{\mathbb{Z}}
\newcommand{\Q}{\mathbb{Q}}
\newcommand{\Sph}{\mathbb{S}} 
\newcommand{\Dom}{\mathcal{D}} 

\begin{document}

\title{A computer-assisted proof of Kuperberg's six-cylinder conjecture}
\author[I. Mati\'c]{
Ivan Mati\'c}
\address{
Baruch College, City University of New York}\email{ivan.matic@baruch.cuny.edu}
\author[R. Radoi\v ci\'c]{
Rado\v s Radoi\v ci\'c} \address{
Baruch College, City University of New York}\email{rados.radoicic@baruch.cuny.edu}

\begin{abstract}
We prove that at most six pairwise non-overlapping infinite unit cylinders can simultaneously touch
a unit ball. Six is achievable, so the answer to Kuperberg's question is exactly six. The proof is
computer-assisted in the following sense: the statement is reduced to a finite case analysis with
$2\,954\,984$ cases, and in each case a program verifies that one of three inequalities between
explicit polynomials with rational coefficients holds throughout a box. Each case is an elementary
arithmetic check, and the complete list of cases is provided with the paper.
\end{abstract}

\maketitle

\section{Introduction}\label{sec:intro}
\subsection{Formulation}\label{subsection:formulation}

Throughout, a \emph{cylinder} means a closed solid right circular cylinder of infinite length and
radius $1$. Two cylinders are \emph{non-overlapping} if their interiors are disjoint, and a
cylinder \emph{touches} the unit ball $B$ if it meets $B$ but not the interior of $B$. In 1990
W.~Kuperberg asked how many pairwise non-overlapping cylinders can touch a fixed unit ball, and
conjectured that the answer is six \cite{K}. We prove that it is.
Our proof has a computer-assisted component that does case analysis. The source code and the
printout of cases can be found at \cite{MRCode2026}.

\begin{theorem} \label{thm:main}
At most six pairwise non-overlapping infinite unit cylinders can simultaneously touch a unit ball.
\end{theorem}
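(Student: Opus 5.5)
Our plan is to reduce the statement to a finite-dimensional configuration problem on the sphere and then run a certified branch-and-bound. Normalize the ball $B$ as the unit ball centered at the origin. A cylinder of radius $1$ touching $B$ has an axis at distance exactly $2$ from the origin. Such an axis is determined by a unit direction $u\in\Sph^2$ (up to sign) and the foot point $p=2v$, where $v\in\Sph^2$ and $v\perp u$. So each cylinder corresponds to a point of the unit tangent bundle of the sphere modulo $u\mapsto -u$, a compact $3$-manifold. Two cylinders with axes $\ell_1,\ell_2$ are non-overlapping iff $\dist(\ell_1,\ell_2)\ge 2$. For skew lines this distance is
\[
\frac{|(p_1-p_2)\cdot(u_1\times u_2)|}{|u_1\times u_2|},
\]
and for parallel axes it is the distance between the foot points. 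Squaring and clearing denominators turns the condition into a polynomial inequality
\[
F(v_1,u_1,v_2,u_2)=\bigl((p_1-p_2)\cdot(u_1\times u_2)\bigr)^2-4|u_1\times u_2|^2\ge 0,
\]
with a separate treatment of near-parallel pairs, where $F$ degenerates and one uses $|p_1-p_2|^2\ge 4$ instead. Theorem~\ref{thm:main} is then equivalent to the statement that no $7$ points of this $3$-manifold are pairwise compatible.

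We first use the symmetry group $\SO(3)$ to fix one cylinder, say $v_1=e_3$ and $u_1=e_1$. We then derive necessary conditions that localize the configuration. The key one is that two compatible cylinders have contact points $v_i,v_j$ separated by a definite minimum angle unless their directions are nearly orthogonal in a controlled way. A measure or area argument on $\Sph^2$, in which each cylinder's contact point carries a neighborhood of guaranteed exclusive area, should bound the count by something small, perhaps $8$. That bound prunes the search space but does not finish the proof.

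The main step is computational. We partition the parameter space of one cylinder into boxes in coordinates, for example spherical angles for $v$ and a rotation angle of $u$ in the tangent plane, using rational endpoints. For each pair of boxes $(Q_i,Q_j)$ we certify incompatibility when $F<0$ holds on all of $Q_i\times Q_j$. We prove this by interval arithmetic, or by expanding $F$ in Bernstein form or as a Taylor polynomial with an explicit remainder, all with rational coefficients. This gives an incompatibility graph on boxes. We then search for $7$-cliques of pairwise not-certified-incompatible boxes, starting from the fixed first cylinder, and recursively subdivide any box that appears in a surviving clique. Three kinds of certificate are likely needed:
\begin{itemize}
\item a direct inequality $F<0$;
\item a near-parallel inequality $|p_1-p_2|^2<4$;
\item an inequality exploiting a third cylinder, i.e.\ some area or angular crowding bound.
\end{itemize}
This matches the paper's mention of three polynomial inequalities checked over boxes.

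The main obstacle is that six cylinders are achievable, and, as known, in several essentially different rigid or flexible ways. Near those configurations the constraints are tight, so seven cylinders are excluded only by a thin margin. Naive subdivision would then blow up. We would first handle this by a local argument near the six-configurations, showing that any configuration close to an optimal six has no room for a seventh contact point, via a quantitative gap in the complementary region of $\Sph^2$. Elsewhere we rely on the exhaustive box search, organizing the cases so that each surviving leaf is closed by one of the three inequalities checked in exact rational arithmetic.
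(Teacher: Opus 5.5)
Your setup is sound as far as it goes. The compatible pairs are exactly $\{F\ge0\}\cap\{|p_1-p_2|^2\ge4\}$, the analogue of Theorem~\ref{thm:domainQ} with $|p_1-p_2|$ in place of $U_1$. This set is closed and its complement is open, so an exact pairwise box search for $7$-cliques would terminate in principle, by compactness, if the theorem is true.

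Two small corrections. First, the foot-point condition must be imposed together with $F\ge0$ everywhere, not instead of $F$ near the parallel locus. $F$ is uninformative only on that locus itself, and just off it, skew axes with foot points exactly $2$ apart can be at distance $1$ (Section~\ref{subsection:discontinuity}). Second, the separation of contact points needs no exception: $\dist(\ell_1,\ell_2)\le|p_1-p_2|$ always forces $|p_1-p_2|\ge2$.

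The real gap is that what you describe is not yet a proof. After fixing one cylinder, the search runs over the $18$-dimensional space of the six remaining cylinders. The two ingredients you say are needed to make it tractable, the third-cylinder crowding certificate and the local argument near the six-configurations, are never specified. The second is also ill-posed. Six cylinders of radius larger than $1$ can touch a unit ball \cite{F,OS}, so unit six-configurations with slack in every pairwise constraint exist. Admissible six-configurations are therefore not a finite list of rigid or flexible optima; they fill a region of positive volume in the $15$-dimensional space of relative positions. A clique search must enumerate partial configurations across all of that region, and nothing in the proposal shows this is feasible.

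The idea you are missing is what makes the paper's computation three-dimensional: it never searches over configurations of several cylinders. Identifying cylinders with elements of $\SO(3)$ makes compatibility of $g_i,g_j$ depend only on whether $g_i^{-1}g_j$ lies in the safe zone $\Dom$. The Delsarte--Bochner bound of Theorem~\ref{thm:dbBound} then does all the many-body work. Take $f(g)=\sum_{\ell\le4}\tr\bigl(D_\ell^{-1}C_\ell T^\ell(g)\bigr)$ with $C_\ell\succeq0$ and $T^\ell$ multiplicative and $D_\ell$-orthogonal. For any $g_1,\dots,g_N$ one has $\sum_{i,j}f(g_i^{-1}g_j)\ge0$. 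Hence $f\le M<0$ on $\Dom$ forces $N\le1+\lfloor f(e)/(-M)\rfloor$, which equals $6$ for $f(e)=23017/4096$ and $M=-19/20$.

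What remains for the computer is a single pairwise inequality, $f\le-19/20$ on the $3$-dimensional set $\Dom$. After quaternion parametrization and homogenization, this is $G\ge0$ on the cone $K$. So the third of the paper's three inequalities is not a crowding bound involving a third cylinder, as you guessed, but this certificate inequality $G\ge0$. The other two, $U_{\mathrm h}<0$ and $Q_{\mathrm h}<0$, are just your two pairwise incompatibility tests. That is why the paper's branch-and-bound needs only about three million boxes, of depth at most $10$, in three variables, and no analysis near six-configurations at all.
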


The bound of Theorem~\ref{thm:main} is attained and the
answer to the question is exactly six, since arrangements of six cylinders are easy to construct.  
One such configuration is constructed as follows. Let 
$B$ be the unit ball centered at the origin, and let 
$A_1A_2\dots A_6$ be a regular hexagon of edge length 
$2$ in the 
$Oxy$ plane, centered at $O$. For 
$i=1$, $\dots$, $6$, let $Z_i$ be the unit cylinder whose axis passes through $A_i$ and is parallel to the 
$Oz$ axis. Since each $A_i$ lies at distance 
$2$ from $O$ and consecutive vertices are at distance 
$2$ from each other, each $Z_i$ touches $B$ as well as its two neighboring cylinders, while the interiors of all seven bodies are mutually disjoint.

\subsection{On the use of AI}\label{subsection:AI}

Some of the ideas in this paper originated in work with Anthropic's Claude. The first proof we
obtained that way appeared to be correct, but it drew at once on representation theory, harmonic analysis,
spherical geometry and combinatorial optimization. Few readers hold that set of tools
simultaneously. We reconstructed the argument to make a much simpler proof. What
remains is an explicit table of integers, six finite checks on it, and one branch-and-bound
computation. None of it need be taken on trust: neither the theory the data came from, nor our
implementation of the computation.

The price is paid in Section \ref{sec:cert}, where matrices and weights appear with no motivation and no
derivation. They are simply data, and every property the proof uses is established directly from
them. The reasoning that produced them is sketched in Section \ref{sec:motivation}, which is
deliberately brief.

\subsection{History of the problem and overview of the literature} \label{subsection:history}

Kuperberg posed the question at a DIMACS workshop in 1990, together with several arrangements of
six unit cylinders touching a ball \cite{K}. The first upper bound is due to Heppes and
Szab\'o \cite{HS}, who showed that at most eight cylinders can touch. Bra\ss{} and Wenk \cite{BW}
improved this to at most seven, by a short argument comparing areas on a concentric sphere.
Whether seven is possible has been open since; Theorem~\ref{thm:main} answers it.

Much of the work since has concerned the radius rather than the number. The familiar configuration
of six unit cylinders is not rigid, and it was for a while natural to guess that six cylinders of
radius greater than $1$ could not touch the ball at all. Firsching \cite{F} showed otherwise,
finding by numerical exploration of the $18$-dimensional configuration manifold six cylinders of
radius $1.049659$ that do. Ogievetsky and Shlosman \cite{OS} later exhibited a
$\mathbb{D}_3$-symmetric configuration with
\[
r=\tfrac18\bigl(3+\sqrt{33}\bigr)\approx1.093070,
\]
and conjectured that this radius is the largest possible. In the opposite direction, Yardimci and
Bezdek \cite{Yar} proved that seven cylinders of radius $1.04965$, which is Firsching's value,
cannot touch the ball; at that one radius the analogue of Kuperberg's question is therefore
settled. The case of radius exactly $1$, which is the case Kuperberg asked about, is the one
treated here.

The problem is a cousin of the kissing number question, where twelve unit balls can touch a unit
ball and thirteen cannot \cite{SW}. Bounds of that kind are the natural home of Delsarte's linear
programming method and of its semidefinite refinement \cite{BV}, and it is a bound of that type,
transported to the rotation group, that we use; Section \ref{sec:motivation} says more about where it
comes from.

\subsection{A discontinuity}\label{subsection:discontinuity}

One feature of the problem shaped the proof below, and it is worth isolating before the argument
begins.

Let us consider the hexagon of Section~\ref{subsection:formulation} for which $A_1=(2,0,0)$ and
$A_2=\bigl(1,\sqrt3,0\bigr)$. We will fix the cylinder $Z_1$ and analyze the rotations of 
$Z_2$ by a small angle $\varepsilon$ about each of $Ox$ and $Oy$. Rotations about $O$ carry lines 
tangent to the sphere of radius $2$ to lines
tangent to it, so the rotated cylinder still touches $B$. The two axes of the cylinders $Z_1$ and $Z_2$ 
are initially at distance exactly 
$2$. After a rotation about $Ox$ they are at distance $1$; after a rotation about $Oy$, at distance
$\sqrt3$. Both hold for every $\varepsilon\ne0$. The nearest points of the two axes lie at distance of order $1/\varepsilon$ from the origin. As the direction of the tilt varies,
the limiting distance fills the whole interval $[0,2]$.  

The distance between two axes is therefore not a continuous function of the configuration. It is
discontinuous exactly where the axes are parallel, which is where the extremal configurations live.
This is a set of measure zero. 

The contrast with the kissing number problem, mentioned above, is sharp. There the condition that
two of the balls do not overlap is a polynomial inequality in their centers, and Delsarte's method
applies to it directly.

In our problem there is no such starting point. The condition on a pair of cylinders is that the
distance between their axes be at least $2$, and that distance is the discontinuous function
described above. Before a bound of Delsarte type can even be set up, the set of admissible pairs
has to be given an algebraic description of its own. Theorem~\ref{thm:domainQ} provides one: a
pair is admissible precisely when two explicit polynomials with integer coefficients are both
non-negative.

\section{The configuration space}\label{sec:reduction}

Fix the unit ball $B$ centered at $O$. Throughout, $u_0=(0,0,1)$, $v_0=(1,0,0)$, $a_1=2u_0=(0,0,2)$.

A solid unit cylinder touches the ball $B$ with center $O$ if and only if the axis
of the cylinder is tangent to the sphere $\Sph_2$ of radius $2$. Two solid unit cylinders have
disjoint interiors if and only if the distance between their axes is $\ge2$.

Recall that $\SO(3)$ is the group of all rotations of the three-dimensional Euclidean space around
the origin. Every cylinder that touches $B$ is uniquely determined by its axis
$\ell=\{2u+tv:t\in\R\}$ with $u,v\in\Sph^2$, $u\perp v$; here $2u$ is the point of tangency and
$v$ a direction of the axis, unique up to sign. There exists a unique $g\in\SO(3)$ such that
$u=gu_0,\ v=gv_0$. We will write $\ell=g\cdot\ell_0$ with $\ell_0=\{2u_0+tv_0\}$.

\begin{definition}\label{def:D}
We will define the \emph{safe zone} $\Dom$ as a subset of $\SO(3)$ in the following way
\[ \Dom=\{g\in\SO(3):\dist(\ell_0,\,g\cdot\ell_0)\ge2\}.\]
\end{definition}

\begin{definition}\label{def:nonOverlappingX}
A set $X\subseteq \SO(3)$ will be called \emph{non-overlapping} if
$g^{-1}g'\in\Dom$ whenever $g$ and $g'$ are two distinct elements of $X$.
\end{definition}

Consider $N$ pairwise non-overlapping cylinders that touch $B$, with axes
$\ell_1,\dots,\ell_N$, and choose $g_i\in\SO(3)$ with $\ell_i=g_i\cdot\ell_0$. The $g_i$ are
distinct, since $g$ determines $\ell=g\cdot\ell_0$. Rotations are isometries, so
$\dist(\ell_i,\ell_j)=\dist(\ell_0,\,g_i^{-1}g_j\cdot\ell_0)$, and the cylinders having disjoint
interiors is exactly the condition $g_i^{-1}g_j\in\Dom$ for all $i\ne j$. Thus
$\{g_1,\dots,g_N\}$ is a non-overlapping subset of $\SO(3)$ with $N$ elements.
Let $N_{\max}$ be the maximal number of elements that a non-overlapping subset of $\SO(3)$ can
have. Theorem \ref{thm:main} will be proved if we can establish $N_{\max}<7$.

\section{Delsarte--Bochner supremum}\label{sec:DBSequence}

Throughout this section, $L\in\mathbb N$ and, for $\ell=1,\dots,L$, we fix
\begin{enumerate}[leftmargin=2.2em,label=(\roman*)]
\item a symmetric positive-definite matrix $D_\ell\in\R^{n_\ell\times n_\ell}$, and
\item a map $T^\ell:\SO(3)\to\R^{n_\ell\times n_\ell}$ satisfying, for all $g_1,g_2,g\in\SO(3)$,
\[
T^\ell(g_1g_2)=T^\ell(g_1)\,T^\ell(g_2)
\qquad\text{and}\qquad
T^\ell(g)^\top D_\ell\, T^\ell(g)=D_\ell .
\]
\end{enumerate}

\begin{lemma}\label{lem:Torth}
Under (i)--(ii), for every $g\in\SO(3)$: $\det T^\ell(g)=\pm1$; $T^\ell(e)=I$; and
\[
T^\ell(g^{-1})\;=\;T^\ell(g)^{-1}\;=\;D_\ell^{-1}\,T^\ell(g)^\top D_\ell .
\]
\end{lemma}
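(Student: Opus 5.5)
The plan is to derive everything from the invariance identity $T^\ell(g)^\top D_\ell T^\ell(g)=D_\ell$ together with multiplicativity, in the order: determinant, then invertibility and the inverse formula, then $T^\ell(e)=I$, then $T^\ell(g^{-1})$.

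\emph{Determinant.} Taking determinants in the invariance identity gives
\[
\det\bigl(T^\ell(g)\bigr)^2\det D_\ell=\det D_\ell .
\]
Since $D_\ell$ is positive definite, $\det D_\ell>0$, so we may cancel it to get $\det T^\ell(g)^2=1$, i.e.\ $\det T^\ell(g)=\pm1$. In particular every $T^\ell(g)$ is invertible.

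\emph{Inverse formula and identity.} Multiplying the invariance identity on the left by $D_\ell^{-1}$ (which exists by positive definiteness) gives
\[
\bigl(D_\ell^{-1}T^\ell(g)^\top D_\ell\bigr)\,T^\ell(g)=I .
\]
Hence $D_\ell^{-1}T^\ell(g)^\top D_\ell$ is a left inverse of the square matrix $T^\ell(g)$, and therefore its two-sided inverse. This proves $T^\ell(g)^{-1}=D_\ell^{-1}T^\ell(g)^\top D_\ell$. Next, multiplicativity applied to $e\cdot e=e$ gives $T^\ell(e)=T^\ell(e)^2$. Since $T^\ell(e)$ is invertible by the first step, multiplying by $T^\ell(e)^{-1}$ yields $T^\ell(e)=I$.

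\emph{Value at $g^{-1}$.} Multiplicativity gives $T^\ell(g)T^\ell(g^{-1})=T^\ell(gg^{-1})=T^\ell(e)=I$, so $T^\ell(g^{-1})=T^\ell(g)^{-1}$.

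Every step is routine. The only point needing care is the order of the argument: invertibility must be established from the determinant before we deduce $T^\ell(e)=I$ from idempotence, because the map is not assumed a priori to send $e$ to $I$.
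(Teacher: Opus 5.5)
Your proof is correct and follows essentially the same route as the paper. Taking determinants gives $\det T^\ell(g)=\pm1$ and invertibility, idempotence of $T^\ell(e)$ gives $T^\ell(e)=I$, multiplicativity gives $T^\ell(g^{-1})=T^\ell(g)^{-1}$, and multiplying the invariance identity by $D_\ell^{-1}$ yields $D_\ell^{-1}T^\ell(g)^\top D_\ell$. The only cosmetic difference is that you invoke ``a left inverse of a square matrix is two-sided'' where the paper instead also right-multiplies by $T^\ell(g)^{-1}$.
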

\begin{proof}
Taking determinants in $T^\top D T=D$ gives $(\det T)^2\det D=\det D$, and $\det D>0$, so
$\det T=\pm1$; in particular every $T^\ell(g)$ is invertible. Then $T(e)=T(e\cdot e)=T(e)^2$ with
$T(e)$ invertible forces $T(e)=I$, whence $T(g)T(g^{-1})=T(e)=I$ gives $T(g^{-1})=T(g)^{-1}$.
Finally, multiplying $T^\top DT=D$ on the left by $D^{-1}$ and on the right by $T^{-1}$ gives
$D^{-1}T^\top D=T^{-1}$.
\end{proof}

\begin{definition}\label{def:DB}
For a sequence $\overrightarrow C=(C_\ell)_{\ell=1}^{L}$ of symmetric positive-semidefinite matrices
$C_\ell\in\R^{n_\ell\times n_\ell}$, we define its Delsarte--Bochner supremum as
\[
\dbsup\bigl(\overrightarrow C\bigr)\;=\;\sup_{g\in\Dom}\;\sum_{\ell=1}^{L}
\tr\!\Bigl(D_\ell^{-1}C_\ell\,T^\ell(g)\Bigr).
\]
\end{definition}

\begin{remark}\label{rem:entrywise}
When $D_\ell=\diag(w^\ell_1,\dots,w^\ell_{n_\ell})$ is
diagonal and $C_\ell$ is symmetric, the correct pairing has the explicit entrywise form
\[
\tr\!\Bigl(D_\ell^{-1}C_\ell\,T^\ell(g)\Bigr)
=\sum_{i,j}\frac{(C_\ell)_{ij}}{w^\ell_j}\;T^\ell(g)_{ij},
\]
the sum of the entries of the Schur product of $C_\ell$ and $T^\ell(g)$, with the $j$-th column
weighted by $1/w^\ell_j$. In particular
$\tr(D_\ell^{-1}C_\ell\,T^\ell(e))=\tr(D_\ell^{-1}C_\ell)=\sum_i (C_\ell)_{ii}/w^\ell_i$.
\end{remark}

\begin{theorem}\label{thm:dbBound}
Let $\overrightarrow C=(C_\ell)_{\ell=1}^{L}$ be a sequence of symmetric positive-semidefinite
matrices $C_\ell\in\R^{n_\ell\times n_\ell}$. If $\dbsup(\overrightarrow C)<0$ and $M$ is any real
number with $\dbsup(\overrightarrow C)\le M<0$, then
\begin{equation}\label{eq:dbBound}
N_{\max}\;\le\;1+\left\lfloor
\frac{\sum_{\ell=1}^{L}\tr\bigl(D_\ell^{-1}C_\ell\bigr)}{-M}\right\rfloor .
\end{equation}
\end{theorem}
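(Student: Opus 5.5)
The plan is the standard Delsarte double-counting argument, transported to $\SO(3)$. Let $X=\{g_1,\dots,g_N\}$ be any non-overlapping subset of $\SO(3)$. For each $\ell$, form the matrix
\[
S_\ell=\sum_{i=1}^N T^\ell(g_i)^{-1}\quad\text{or more conveniently}\quad \Sigma=\sum_{\ell=1}^L\sum_{i,j=1}^N \tr\!\Bigl(D_\ell^{-1}C_\ell\,T^\ell(g_i^{-1}g_j)\Bigr).
\]
I will bound $\Sigma$ from below by $0$ using positive semidefiniteness, and from above by splitting it into diagonal and off-diagonal terms. The lower bound comes from the invariance in (ii); the upper bound comes from the definition of $\dbsup$ and the non-overlapping hypothesis.

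\emph{Lower bound.} Write $T^\ell(g_i^{-1}g_j)=T^\ell(g_i)^{-1}T^\ell(g_j)=D_\ell^{-1}T^\ell(g_i)^\top D_\ell T^\ell(g_j)$, using Lemma~\ref{lem:Torth}. Then the $\ell$-th summand equals
\[
\tr\!\Bigl(D_\ell^{-1}C_\ell D_\ell^{-1}A_\ell^\top D_\ell A_\ell\Bigr),\qquad A_\ell=\sum_j T^\ell(g_j).
\]
Set $P=D_\ell^{-1/2}$, with $D_\ell^{1/2}$ the symmetric positive-definite square root. By cyclicity the summand becomes $\tr\bigl(K^\top \widetilde C K\bigr)$, where $\widetilde C=PC_\ell P$ is positive semidefinite and $K=D_\ell^{1/2}A_\ell D_\ell^{-1/2}$. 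Some care is needed in this rearrangement: I must check that the factors regroup exactly into this form, possibly as $\tr(\widetilde C\,KK^\top)$. The conclusion is that $\tr(\widetilde C\,KK^\top)\ge0$, since it is the trace of a product of two positive semidefinite matrices. Hence $\Sigma\ge0$. This algebraic regrouping is the step I expect to require the most attention; everything else is bookkeeping.

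\emph{Upper bound.} The diagonal terms $i=j$ give $T^\ell(e)=I$ by Lemma~\ref{lem:Torth}, contributing $N\sum_\ell \tr(D_\ell^{-1}C_\ell)$. For $i\ne j$, non-overlapping gives $g_i^{-1}g_j\in\Dom$. The inner sum over $\ell$ is then at most $\dbsup(\overrightarrow C)\le M$, so the $N(N-1)$ off-diagonal pairs contribute at most $N(N-1)M$. Thus
\[
0\le N\sum_\ell\tr(D_\ell^{-1}C_\ell)+N(N-1)M.
\]
Dividing by $N(-M)>0$ yields $N-1\le \sum_\ell\tr(D_\ell^{-1}C_\ell)/(-M)$. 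Since $N-1$ is an integer, $N\le 1+\lfloor\cdot\rfloor$. Taking the supremum over non-overlapping $X$ gives~\eqref{eq:dbBound}. In particular $N_{\max}$ is finite, so the supremum is attained as a maximum.
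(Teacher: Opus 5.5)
Your proof is correct and follows the paper's argument: you rewrite $T^\ell(g_i^{-1}g_j)$ via Lemma~\ref{lem:Torth}, sum over ordered pairs to obtain a nonnegative trace of a product of positive-semidefinite matrices, and then split the diagonal pairs from the off-diagonal ones. The regrouping you flagged comes out as $\tr(\widetilde C\,K^\top K)$ rather than $\tr(\widetilde C\,KK^\top)$ or $\tr(K^\top\widetilde C K)$, which is harmless since $K^\top K\succeq0$ as well; the paper avoids the square roots altogether and applies $\tr(PQ)\ge0$ directly to $D_\ell^{-1}C_\ell D_\ell^{-1}$ and $S_\ell^\top D_\ell S_\ell$.
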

\begin{proof}
Let $g_1,\dots,g_N$ be distinct elements of a non-overlapping set
(Definition~\ref{def:nonOverlappingX}), and put $S_\ell=\sum_{i=1}^{N}T^\ell(g_i)$. By
Lemma~\ref{lem:Torth},
\[
T^\ell\bigl(g_i^{-1}g_j\bigr)=T^\ell(g_i)^{-1}T^\ell(g_j)
=D_\ell^{-1}\,T^\ell(g_i)^\top D_\ell\,T^\ell(g_j),
\]
so that, summing over all ordered pairs and using linearity and cyclicity of the trace,
\begin{align*}
\sum_{i=1}^{N}\sum_{j=1}^{N}\tr\!\Bigl(D_\ell^{-1}C_\ell\,T^\ell\bigl(g_i^{-1}g_j\bigr)\Bigr)
&=\sum_{i,j}\tr\!\Bigl(D_\ell^{-1}C_\ell D_\ell^{-1}\,T^\ell(g_i)^\top D_\ell\,T^\ell(g_j)\Bigr)\\
&=\tr\!\Bigl(\bigl(D_\ell^{-1}C_\ell D_\ell^{-1}\bigr)\bigl(S_\ell^\top D_\ell S_\ell\bigr)\Bigr)
\;\ge\;0,
\end{align*}
since the trace of a product of two symmetric positive-semidefinite matrices is non-negative
(for such $P,Q$: $\tr(PQ)=\tr(\sqrt Q\,P\sqrt Q\,)\ge0$, the trace of a positive-semidefinite
matrix): $D_\ell^{-1}C_\ell D_\ell^{-1}$ is a congruence of $C_\ell$, and
$S_\ell^\top D_\ell S_\ell$ a congruence of $D_\ell$, so both are symmetric positive semidefinite.

Summing over $\ell$ and separating the $N$ diagonal pairs $i=j$ (where
$T^\ell(e)=I$ gives the value $\sum_\ell\tr(D_\ell^{-1}C_\ell)$) from the $N(N-1)$ off-diagonal
pairs (where $g_i^{-1}g_j\in\Dom$ by Definition~\ref{def:nonOverlappingX}, so each term is at most
$\dbsup(\overrightarrow C)\le M$),
\[
0\;\le\;N\sum_{\ell=1}^{L}\tr\bigl(D_\ell^{-1}C_\ell\bigr)\;+\;N(N-1)\,M .
\]
Since $M<0$, dividing by $N$ and rearranging gives
$(N-1)(-M)\le\sum_\ell\tr(D_\ell^{-1}C_\ell)$, which implies \eqref{eq:dbBound}.
\end{proof}

\section{Strategy for the proof}\label{sec:Strategy}

We will use the word \emph{certificate} for an explicit choice of matrices $D_{\ell}$, $C_{\ell}$,
and the maps $T^{\ell}$ for which we will be able to verify the conditions of Section \ref{sec:DBSequence}
and of Theorem~\ref{thm:dbBound}. Once those conditions are verified, the theorem gives the bound
\eqref{eq:dbBound}. In the case of our certificate, the bound becomes $N_{\max}\le6$.

We will apply Theorem~\ref{thm:dbBound} with $L=4$ and $M=-\tfrac{19}{20}$.

In Section \ref{sec:cert} we construct, for $\ell=1,\dots,4$, the following:
\begin{itemize}\item Explicit \emph{rational} diagonal matrices
$D_\ell=\diag(w^\ell_1,\dots,w^\ell_{2\ell+1})$ with positive diagonal entries,
\item Explicit symmetric
\emph{rational} matrices $C_\ell$, and 
\item Explicit matrices $T^\ell_{\mathrm h}$ of polynomials in $q_0$, $q_1$, $q_2$, $q_3$. 
\end{itemize} 
The arguments of $T^{\ell}$, which are the elements of $\SO(3)$, will be represented with
quaternions. The precise conditions the certificate must satisfy are the hypotheses
\ref{H:hom}--\ref{H:cover} of Theorem~\ref{thm:polyform} in Section~\ref{sec:quat}. In Section
\ref{sec:cert}, we verify that the provided matrices satisfy \ref{H:hom}--\ref{H:trace}. In 
Section~\ref{sec:cover} we construct a computer-assisted proof for \ref{H:cover}.

\section{Reduction to a polynomial problem}\label{sec:quat}
We will now find an equivalent problem in which $\SO(3)$ does not appear, and whose only objects
are polynomials in the four variables $q_0$, $q_1$, $q_2$, and $q_3$.

\subsection{Quaternions and rotations}\label{subs:quaternionsBasic}
For $q=(q_0,q_1,q_2,q_3)\in\R^4$ define
\[
R(q)=\left[\begin{array}{ccc}
q_0^2+q_1^2-q_2^2-q_3^2 & 2(q_1q_2-q_0q_3) & 2(q_1q_3+q_0q_2)\\
2(q_1q_2+q_0q_3) & q_0^2-q_1^2+q_2^2-q_3^2 & 2(q_2q_3-q_0q_1)\\
2(q_1q_3-q_0q_2) & 2(q_2q_3+q_0q_1) & q_0^2-q_1^2-q_2^2+q_3^2
\end{array}\right],
\]
whose entries are integer quadratic forms in $q$, and write
$|q|^2=q_0^2+q_1^2+q_2^2+q_3^2$. Let $p*q$ denote the Hamilton product,
\begin{align*}
(p*q)_0&= p_0q_0-p_1q_1-p_2q_2-p_3q_3,\\
(p*q)_1&=p_0q_1+q_0p_1+p_2q_3-p_3q_2,\\
(p*q)_2&=p_0q_2+q_0p_2+p_3q_1-p_1q_3,\\
(p*q)_3&=p_0q_3+q_0p_3+p_1q_2-p_2q_1.
\end{align*}

\begin{lemma}\label{lem:quat}
The following hold as polynomial identities in $\Z[p,q]$:
\begin{enumerate}[leftmargin=2.2em,label=(\alph*)]
\item $R(q)^\top R(q)=|q|^4 I$ and $\det R(q)=|q|^6$; in particular $R(q)\in\SO(3)$ whenever
$|q|=1$;
\item $R(p*q)=R(p)\,R(q)$ and $|p*q|^2=|p|^2|q|^2$;
\item the ten products $q_aq_b$ $(0\le a\le b\le3)$ are $\Z$-linear combinations of $|q|^2$ and the
entries $r_{ij}$ of $R(q)$; explicitly
$4q_0^2=|q|^2+r_{11}+r_{22}+r_{33}$, $4q_1^2=|q|^2+r_{11}-r_{22}-r_{33}$,
$4q_2^2=|q|^2-r_{11}+r_{22}-r_{33}$, $4q_3^2=|q|^2-r_{11}-r_{22}+r_{33}$,
$4q_0q_1=r_{32}-r_{23}$, $4q_0q_2=r_{13}-r_{31}$, $4q_0q_3=r_{21}-r_{12}$,
$4q_1q_2=r_{12}+r_{21}$, $4q_1q_3=r_{13}+r_{31}$, $4q_2q_3=r_{23}+r_{32}$.
\end{enumerate}
\end{lemma}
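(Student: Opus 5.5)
The three parts are polynomial identities in $\Z[p,q]$, so the plan is to verify them by direct algebra, organised so that little brute-force expansion is needed. The cleanest route is to identify $R(q)$ with the conjugation map of quaternions. For $x\in\R^3$, regarded as the pure quaternion $(0,x)$, I will show that
\[
q*(0,x)*\bar q=(0,R(q)x),
\]
where $\bar q=(q_0,-q_1,-q_2,-q_3)$. Both sides are quadratic in $q$ and linear in $x$, so it suffices to check the identity on the basis vectors $x=e_1,e_2,e_3$. Each check reproduces one column of $R(q)$. This is a finite comparison of coefficients.

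Part (b) follows from this description. The Hamilton product is associative, which is a finite polynomial identity. We also have $\overline{p*q}=\bar q*\bar p$. Hence
\[
(p*q)*(0,x)*\overline{p*q}=p*\bigl(q*(0,x)*\bar q\bigr)*\bar p,
\]
which gives $R(p*q)x=R(p)R(q)x$ for every $x$, and so $R(p*q)=R(p)R(q)$. The norm identity $|p*q|^2=|p|^2|q|^2$ is Euler's four-square identity. It can be checked directly: the cross terms cancel in pairs.

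For part (a), the identity $q*\bar q=(|q|^2,0,0,0)$ shows that conjugation multiplies the squared norm by $|q|^4$. So $|R(q)x|^2=|q|^4|x|^2$ for all $x$, and polarisation gives $R(q)^\top R(q)=|q|^4I$. Alternatively, one can check the six column inner products directly: each diagonal entry is a sum of squares that collapses to $(q_0^2+q_1^2+q_2^2+q_3^2)^2$, and each off-diagonal entry cancels.

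The orthogonality relation gives $(\det R(q))^2=|q|^{12}$, but it does not fix the sign, and fixing the sign is the only slightly delicate point. One option is to expand $\det R(q)$ directly. A lighter option is to argue as follows. The polynomial $\det R(q)$ satisfies $(\det R)^2=|q|^{12}$ in the UFD $\Z[q]$, and $|q|^2$ is irreducible there (it has no real linear factor). Therefore $\det R=\pm|q|^6$ as polynomials, and evaluating at $q=(1,0,0,0)$, where $R=I$, fixes the sign as $+$. From this, $R(q)\in\SO(3)$ whenever $|q|=1$.

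Part (c) is a direct check of each of the ten stated formulas. The diagonal entries of $R(q)$ are $r_{11}=q_0^2+q_1^2-q_2^2-q_3^2$, and similarly for $r_{22}$ and $r_{33}$. Adding and subtracting these against $|q|^2$ isolates $4q_a^2$; for example, the sum $|q|^2+r_{11}+r_{22}+r_{33}$ gives $4q_0^2$. The off-diagonal entries come in pairs such as $r_{12}=2(q_1q_2-q_0q_3)$ and $r_{21}=2(q_1q_2+q_0q_3)$, whose sum and difference give $4q_1q_2$ and $4q_0q_3$; the other two pairs work the same way. No step is a real obstacle. The only things needing care are the sign of the determinant in (a) and keeping the index conventions for $R$ consistent with the Hamilton product in the conjugation check, since a mismatch there would produce $R(q)^\top$ instead of $R(q)$.
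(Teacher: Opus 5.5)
Your proposal is correct, but it takes a different route from the paper. The paper's proof only observes that each identity in (a)--(c) has degree at most $6$, and verifies all of them by expanding both sides and comparing coefficients.

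You instead derive (b) and (a) from the conjugation formula $q*(0,x)*\bar q=(0,R(q)x)$. This formula does hold with the paper's conventions: for $x=e_1$ it yields exactly the first column $(r_{11},r_{21},r_{31})$, not the first row. You combine it with associativity of $*$ and with $\overline{p*q}=\bar q*\bar p$. Orthogonality then follows from norm multiplicativity and polarisation. The cleanest justification is to apply the four-square identity twice, $|q*(0,x)*\bar q|^2=|q|^2|x|^2|\bar q|^2$, rather than citing $q*\bar q=|q|^2$ alone. The sign of $\det R(q)$ comes from unique factorisation in $\Z[q]$, irreducibility of $|q|^2$, and evaluation at $(1,0,0,0)$. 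A continuity argument on the connected set $\R^4\setminus\{0\}$, where $\det R(q)=\pm|q|^6\neq0$, would fix the sign equally well. Part (c) is the same direct check in both.

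What each route buys: the paper's is uniform and purely mechanical, in line with its aim that every step be a comparison anyone or any program can redo. By hand, however, the degree-$6$ determinant and the nine degree-$4$ entries of $R(p*q)-R(p)R(q)$ in eight variables are laborious. Yours replaces these with a few low-degree identities: three column checks, associativity, $\overline{p*q}=\bar q*\bar p$, and Euler's identity, plus standard algebraic facts. It also explains where $R$ comes from, namely the double cover used in Lemma~\ref{lem:cover2}.

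One remark applies to both you and the paper. The ten displayed formulas show that $4q_aq_b$, not $q_aq_b$, is a $\Z$-linear combination of $|q|^2$ and the $r_{ij}$. In fact $q_1q_2$ itself is not one, since any such combination has an even coefficient on $q_1q_2$. This is harmless, because Lemma~\ref{lem:cover2}(a) only needs rational combinations.
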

\begin{proof}
Each of the identities above is an equality between polynomials with integer coefficients. The degrees of the polynomials 
are at most $6$ in the
four variables $q$ in (a) and (c). The degrees are at most $4$ in the eight variables $p,q$ in (b).
Each identity can be directly verified by expanding both sides and comparing coefficients.
\end{proof}

\begin{lemma}\label{lem:cover2}
\leavevmode
\begin{enumerate}[leftmargin=2.2em,label=(\alph*)]
\item If $q,q'$ are unit quaternions with $R(q)=R(q')$, then $q'=\pm q$.
\item Every $g\in\SO(3)$ equals $R(q)$ for some unit quaternion $q$.
\end{enumerate}
\end{lemma}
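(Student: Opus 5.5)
For part (a), the plan is to use Lemma~\ref{lem:quat}(c), which recovers every quadratic monomial $q_aq_b$ from $|q|^2$ and the entries of $R(q)$. Suppose $|q|=|q'|=1$ and $R(q)=R(q')$. Then by (c) we have $q_aq_b=q'_aq'_b$ for all $0\le a\le b\le 3$; in other words, the rank-one matrices $qq^\top$ and $q'q'^\top$ coincide. Since $|q|=1$, some index $a$ has $q_a\ne0$. From $q_a^2=q_a'^2$ we get $q'_a=\varepsilon q_a$ with $\varepsilon\in\{\pm1\}$. For every $b$, the relation $q_aq_b=q'_aq'_b=\varepsilon q_aq'_b$ then gives $q'_b=\varepsilon q_b$, and hence $q'=\varepsilon q$.

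For part (b), I would argue by connectedness and a dimension count, giving an elementary explicit construction as a backup. By Lemma~\ref{lem:quat}(a),(b), the map $q\mapsto R(q)$ is a continuous group homomorphism from the unit sphere $\Sph^3$ to $\SO(3)$. Part (a) shows that its kernel is $\{\pm1\}$, since $R(q)=I=R(1)$ forces $q=\pm1$.

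The self-contained route is explicit, in the style of the paper. Take $g\in\SO(3)$. By Euler's theorem, $g$ fixes a unit vector $n$: $\det(g-I)=\det(g^\top)\det(I-g)$, and this, together with $\det g=1$ and the fact that $3$ is odd, gives $\det(g-I)=0$. Then $g$ is a rotation by some angle $\theta$ about $n$. Set $q=(\cos\frac\theta2,\ \sin\frac\theta2\, n)$. Expanding the entries of $R(q)$ with the double-angle formulas gives Rodrigues' formula
\[
R(q)=\cos\theta\, I+(1-\cos\theta)\,nn^\top+\sin\theta\,[n]_\times ,
\]
which is exactly the rotation by $\theta$ about $n$. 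The sign of $\theta$ is fixed by the orientation convention, and replacing $n$ by $-n$ if necessary makes it match.

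The one step needing care is showing that $g$ is a rotation about $n$ by a well-defined angle. On $n^\perp$, the map $g$ is an orientation-preserving orthogonal map of a plane, because $\det g=1$ and $gn=n$. In an orthonormal basis $(e,f)$ of $n^\perp$ with $e\times f=n$, its matrix is therefore $\begin{pmatrix}\cos\theta&-\sin\theta\\ \sin\theta&\cos\theta\end{pmatrix}$. This uses only the classification of $2\times 2$ orthogonal matrices of determinant $1$.

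An alternative, avoiding angles altogether, is to invert the formulas in Lemma~\ref{lem:quat}(c) directly. Given $g=(r_{ij})$, choose the largest of the four quantities $1\pm r_{11}\pm r_{22}\pm r_{33}$ (with the sign patterns appearing in (c)). Their sum is $4$, so the largest is positive; define the corresponding $q_a$ as half its square root, and define the other $q_b$ by dividing the off-diagonal combinations in (c) by $4q_a$. One then checks $R(q)=g$ using $g^\top g=I$ and $\det g=1$. I regard this verification, which amounts to showing that the matrix $(q_aq_b)$ so built is consistent, as the main obstacle of that route. This is why I would present the Euler-axis argument as the primary proof.
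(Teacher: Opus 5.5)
Your proposal is correct and follows the paper's proof. For (a) you recover $qq^\top$ from Lemma~\ref{lem:quat}(c) and compare at an index with $q_a\ne0$; for (b) your primary argument is the same Euler-axis construction with $q=(\cos\frac\theta2,\sin\frac\theta2\,n)$, and your explicit Rodrigues formula makes the paper's ``direct computation'' concrete. The connectedness sketch and the inversion of (c) are unfinished side remarks, but the proof does not rely on them, so they do not affect it.
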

\begin{proof} (a) By Lemma~\ref{lem:quat}(c) the entries of $R(q)$ together with $|q|^2=1$ determine every product
$q_aq_b$, i.e.\ the symmetric rank-one matrix $qq^\top$. If $qq^\top=q'q'^\top$ then, taking any
index $a$ with $q_a\ne0$ (such an index exists because $|q|=1$), the $a$-th columns give
$q_a\,q=q'_a\,q'$ with $q_a^2=(q'_a)^2$, so $q'=\pm q$.

\noindent (b) First, every $g\in\SO(3)$ has an eigenvector corresponding to eigenvalue $1$. Indeed, since
$\det g=1$ and $g^\top g=I$,
\[
\det(g-I)=\det(g)\det(I-g^\top)=\det\bigl((I-g)^\top\bigr)=\det(I-g)=(-1)^3\det(g-I).
\]
Therefore, $\det(g-I)=0$ and there is a unit vector $n$ with $gn=n$. Since $g$ is orthogonal and
fixes $n$, it maps the plane $n^\perp$ to itself, and its restriction there is an
orientation-preserving linear isometry of a plane, hence a rotation by some angle $\theta$. Let
$n=(n_x,n_y,n_z)$ and set
\[q=\left(\cos\tfrac\theta2,\;n_x\sin\frac\theta2,\;n_y\sin\frac\theta2,\;n_z\sin\frac\theta2\right).\]
Clearly, $q$ is a unit quaternion. A direct computation from the definition of $R(q)$ shows
$R(q)n=n$ and that $R(q)$ rotates $n^\perp$ by $\theta$ with the same orientation. Therefore,
$R(q)=g$.
\end{proof}

\subsection{The safe zone as a pair of polynomial inequalities}\label{subsec:safezoneQ}

By Lemma~\ref{lem:cover2} the map $q\mapsto R(q)$ carries the unit sphere of $\R^4$ onto
$\SO(3)$, two-to-one. To transport the safe zone $\Dom$ of Definition~\ref{def:D} along this map we
must first express the condition $\dist(\ell_0,g\cdot\ell_0)\ge2$ algebraically in $g$. This is
the purpose of the present subsection. The quaternions $q$ will be introduced in the next subsection.

Fix $g\in\SO(3)$ and write
\[
d_1=v_0,\qquad d_2=gv_0,\qquad a_2=2gu_0,\qquad w=a_2-a_1,
\qquad n=d_1\times d_2,\qquad N_2=|n|^2 .
\]
Thus $\ell_0$ passes through $a_1$ with unit direction $d_1$, and $g\cdot\ell_0$ passes through
$a_2$ with unit direction $d_2$. Writing $c=d_1\cdot d_2$, we have $N_2=1-c^2$, so $N_2>0$ exactly
when $d_1,d_2$ are linearly independent, that is, when the two lines are skew or intersecting, and
$N_2=0$ exactly when they are parallel.

\begin{lemma}\label{lem:projQ}
Suppose $N_2>0$, and let $\overline{w}$ be the orthogonal projection of $w$ onto
$\mathrm{span}\{d_1,d_2\}$. Then $N_2\,|\overline{w}|^{2}=\Gamma(w)$, where $\Gamma(w)$ is defined by 
\begin{align}
\Gamma(w)&=(w\cdot d_1)^{2}-2(w\cdot d_1)(w\cdot d_2)(d_1\cdot d_2)+(w\cdot d_2)^{2}. \label{eq:GammaQ}
\end{align}
\end{lemma}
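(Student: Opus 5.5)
The projection $\overline w$ lies in $\mathrm{span}\{d_1,d_2\}$, so I will write it as $\overline w=\alpha d_1+\beta d_2$. I will solve the normal equations for $\alpha,\beta$ and then compute $|\overline w|^2=\overline w\cdot w$. Only $|d_1|=|d_2|=1$ is used, together with the abbreviations $c=d_1\cdot d_2$, $x=w\cdot d_1$, $y=w\cdot d_2$ and $N_2=1-c^2$. The latter was recorded just before the lemma and is Lagrange's identity $|d_1\times d_2|^2=|d_1|^2|d_2|^2-(d_1\cdot d_2)^2$.

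First, the residual $w-\overline w$ is orthogonal to $d_1$ and to $d_2$, which gives the $2\times2$ system
\[
\alpha+c\beta=x,\qquad c\alpha+\beta=y .
\]
Its Gram determinant is $1-c^2=N_2>0$, so the system is uniquely solvable. Cramer's rule gives
\[
\alpha=\frac{x-cy}{N_2},\qquad \beta=\frac{y-cx}{N_2}.
\]

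Second, since $w-\overline w\perp\overline w$, we have $|\overline w|^2=\overline w\cdot w=\alpha x+\beta y$. Multiplying by $N_2$ gives
\[
N_2|\overline w|^2=x(x-cy)+y(y-cx)=x^2-2cxy+y^2 .
\]
This is exactly $\Gamma(w)$ in \eqref{eq:GammaQ}.

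No step is a genuine obstacle. The only point needing care is the hypothesis $N_2>0$: it guarantees that $d_1,d_2$ are linearly independent, so the projection coefficients are unique and the division by $N_2$ is legitimate.
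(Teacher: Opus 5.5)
Your proof is correct and follows the paper's argument essentially step for step. You solve the normal equations for the coefficients of $\overline w$ by Cramer's rule with determinant $1-c^2=N_2$, then use $|\overline w|^2=w\cdot\overline w$ and multiply by $N_2$.
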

\begin{proof}
Write $\overline{w}=s\,d_1+t\,d_2$ and put $b_1=w\cdot d_1$, $b_2=w\cdot d_2$. Since
$w-\overline{w}$ is orthogonal to $d_1$ and to $d_2$, taking inner products with $d_1,d_2$ gives
\[
\left[
\begin{array}{cc}1&c\\ c&1\end{array}\right]\left[\begin{array}{c}s\\ t\end{array}\right]
=\left[\begin{array}{c}b_1\\ b_2\end{array}\right],
\]
whose determinant is $1-c^{2}=N_2>0$; Cramer's rule gives $N_2\,s=b_1-cb_2$ and
$N_2\,t=b_2-cb_1$. Also $w-\overline{w}\perp \overline{w}$, so
$|\overline{w}|^{2}=w\cdot \overline{w}=s\,b_1+t\,b_2$. Multiplying by $N_2$ and substituting,
\[
N_2|\overline{w}|^{2}=(b_1-cb_2)b_1+(b_2-cb_1)b_2=b_1^{2}-2c\,b_1b_2+b_2^{2}=\Gamma(w). \qedhere
\]
\end{proof}

The distance from the point $a_2$ to the line $\ell_0$ is $U_1$ given by 
\begin{align}
U_1^2&=|w|^{2}-(w\cdot d_1)^{2}. \label{eq:UQ}
\end{align}

\begin{lemma}\label{lem:distQ}
With the notation above,
\[
\dist^{2}(\ell_0,\,g\cdot\ell_0)=
\begin{cases}
|w|^{2}-\Gamma(w)/N_2, & N_2>0,\\[2pt]
U_1^{2}, & N_2=0 .
\end{cases}
\]
Moreover $\Gamma(w)=0$ whenever $N_2=0$.
\end{lemma}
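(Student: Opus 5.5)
The plan is to treat the two cases of the formula separately, using the standard description of the distance between two lines as the norm of a residual vector. The lines are $\ell_0=\{a_1+sd_1\}$ and $g\cdot\ell_0=\{a_2+td_2\}$. Their distance is
\[
\dist(\ell_0,g\cdot\ell_0)=\min_{s,t\in\R}|a_2+td_2-a_1-sd_1|=\min_{s,t}|w-(sd_1-td_2)|.
\]
This minimum is the distance from $w$ to the subspace $\mathrm{span}\{d_1,d_2\}$, since $sd_1-td_2$ ranges over exactly that span as $(s,t)$ ranges over $\R^2$.

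\emph{Case $N_2>0$.} Here $d_1,d_2$ span a plane, and the distance from $w$ to that plane is $|w-\overline w|$. By Pythagoras (since $w-\overline w\perp\overline w$),
\[
\dist^2=|w|^2-|\overline w|^2 .
\]
Lemma~\ref{lem:projQ} gives $|\overline w|^2=\Gamma(w)/N_2$, which yields the first line of the formula. The minimum is attained because this is an orthogonal projection onto a finite-dimensional subspace.

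\emph{Case $N_2=0$.} Now $c^2=1$, so $d_2=\pm d_1$ (both are unit vectors), and the span is $\R d_1$. The distance from $w$ to $\R d_1$ is given by $|w|^2-(w\cdot d_1)^2=U_1^2$. This agrees with \eqref{eq:UQ}, which by definition is the distance from $a_2$ to $\ell_0$; that is consistent, because for parallel lines the distance from any point of one line to the other line is the same.

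For the final claim, substitute $d_2=\sigma d_1$ with $\sigma=\pm1$, so that $c=\sigma$ and $w\cdot d_2=\sigma b_1$. Then
\[
\Gamma(w)=b_1^2-2\sigma^2 b_1^2+\sigma^2 b_1^2=0 .
\]

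No step is a real obstacle. The only point needing care is the reduction of the line-line distance to the distance from $w$ to the span of the directions: the sign convention ($-t$ versus $t$) does not matter because the span is a linear subspace, and in the parallel case one must notice that the span degenerates to a line.
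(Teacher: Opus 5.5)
Your proof is correct and follows the paper's route. In both versions the line distance is the distance from $w$ to $\mathrm{span}\{d_1,d_2\}$, the case $N_2>0$ is Lemma~\ref{lem:projQ} plus Pythagoras, and $\Gamma(w)=0$ comes from the same substitution $d_2=\pm d_1$. The only difference is in the parallel case: the paper first checks $w\perp d_1$ (using $u_0\perp v_0$ and orthogonality of $g$) and reads the distance off as $|w|$, whereas you compute the distance from $w$ to $\R d_1$ directly as $\bigl(|w|^2-(w\cdot d_1)^2\bigr)^{1/2}=U_1$, which makes that orthogonality check unnecessary.
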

\begin{proof}
\emph{Case $N_2>0$.} The distance between $\ell_0$ and $g\cdot \ell_0$ is 
precisely the distance from the point $a_2$ to the plane through $\ell_0$ parallel to $g\cdot \ell_0$. This is the distance from $w$ to $\mathrm{span}\{d_1,d_2\}$. 
 Lemma~\ref{lem:projQ} and the Pythagorean theorem directly imply the desired result.

\emph{Case $N_2=0$.} Here $d_2=\pm d_1$, so $w\cdot d_2=\pm(w\cdot d_1)$ and
$d_1\cdot d_2=\pm1$ with matching signs; substituting into \eqref{eq:GammaQ},
\[
\Gamma(w)=(w\cdot d_1)^{2}-2(w\cdot d_1)^{2}+(w\cdot d_1)^{2}=0 .
\]
Moreover $w\perp d_1$. Indeed $a_1\cdot d_1=2u_0\cdot v_0=0$, and since $g$ is
orthogonal and $d_2=\pm d_1$ we have
$a_2\cdot d_1=2(gu_0)\cdot(\pm gv_0)=\pm2\,(u_0\cdot v_0)=0$. The two lines are
parallel, so the distance between them is the distance from $a_2$ to $\ell_0$,
which is $|w|$. Since $w\cdot d_1=0$, the required square of the distance is $|w|^{2}=|w|^{2}-(w\cdot d_1)^{2}=U_1^{2}$.
\end{proof}

Let us define
\begin{align}
Q&=\bigl(|w|^{2}-4\bigr)N_2-\Gamma(w). \label{eq:QQ}
\end{align} 

Observe that  
\begin{equation}\label{eq:QisN2distQ}
Q=N_2\bigl(\dist^{2}(\ell_0,\,g\cdot\ell_0)-4\bigr)\qquad\text{when }N_2>0 .
\end{equation}

\begin{theorem}\label{thm:domainQ}
The safe zone satisfies $\Dom=\{Q\ge0\}\cap\{U_1^{2}\ge4\}.$
\end{theorem}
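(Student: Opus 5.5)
The plan is to split on whether the two axes are parallel, that is, on $N_2=0$ versus $N_2>0$, and in each case compare both sides of the claimed equality using Lemma~\ref{lem:distQ} and \eqref{eq:QisN2distQ}. First I will record why $U_1$ in \eqref{eq:UQ} is a distance. Since $a_1\in\ell_0$ and $d_1$ is a unit direction of $\ell_0$, the Pythagorean theorem shows that $\sqrt{|w|^2-(w\cdot d_1)^2}$ is the distance from $a_2$ to $\ell_0$. Because $a_2$ lies on $g\cdot\ell_0$, this gives the inequality
\[
U_1\;\ge\;\dist(\ell_0,\,g\cdot\ell_0),
\]
which holds in every case. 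This single observation is the only step beyond bookkeeping.

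\emph{Case $N_2=0$.} By Lemma~\ref{lem:distQ}, $\Gamma(w)=0$, so $Q=(|w|^2-4)\cdot 0-0=0$ and the condition $Q\ge0$ holds automatically. The same lemma gives $\dist^2(\ell_0,g\cdot\ell_0)=U_1^2$. Hence $g\in\Dom$ if and only if $U_1^2\ge4$, which is the claimed equality restricted to this case.

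\emph{Case $N_2>0$.} By \eqref{eq:QisN2distQ}, $Q=N_2\bigl(\dist^2-4\bigr)$ with $N_2>0$, so $Q\ge0$ if and only if $\dist(\ell_0,g\cdot\ell_0)\ge2$, that is, if and only if $g\in\Dom$. It remains to check that the extra condition $U_1^2\ge4$ is redundant here. If $g\in\Dom$, the inequality recorded above gives $U_1\ge\dist\ge2$, so $g$ lies in the right-hand side. Conversely, any $g$ in the right-hand side has $Q\ge0$ and hence lies in $\Dom$.

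Combining the two cases yields $\Dom=\{Q\ge0\}\cap\{U_1^2\ge4\}$. I do not expect a genuine obstacle. The only point needing care is the justification of $U_1\ge\dist$, namely that a point-to-line distance bounds the line-to-line distance from above, together with the remark that in the parallel case the equality $Q=0$ must come from $\Gamma(w)=0$ rather than from the distance formula, which is undefined there.
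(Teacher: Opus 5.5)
Your proposal is correct and follows the paper's proof essentially step for step: the universal inequality $\dist(\ell_0,g\cdot\ell_0)\le U_1$ because $a_2\in g\cdot\ell_0$, then the split into $N_2>0$ (where $Q\ge0$ is equivalent to $g\in\Dom$ and $U_1^2\ge4$ is redundant) and $N_2=0$ (where $Q=0$ comes from $\Gamma(w)=0$ and $\dist=U_1$). Nothing is missing.
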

\begin{proof}
Fix $g$ and write $\delta=\dist(\ell_0,g\cdot\ell_0)$, so that $g\in\Dom$ means $\delta\ge2$.
Since $a_2$ lies on $g\cdot\ell_0$, we have $\delta\le U_1$ always.

\emph{Case $N_2>0$.} By \eqref{eq:QisN2distQ} and $N_2>0$, the sign of $Q$ is the sign of
$\delta^{2}-4$; hence $Q\ge0\iff\delta\ge2$. If $Q\ge0$ then $U_1\ge\delta\ge2$, so the second
inequality is automatic. Therefore, on $\{N_2>0\}$,
$\Dom=\{Q\ge0\}=\{Q\ge0\}\cap\{U_1^{2}\ge4\}$.

\emph{Case $N_2=0$.} Then $\Gamma(w)=0$ by Lemma~\ref{lem:distQ}, and $N_2=0$, so $Q=0$: the first
inequality holds identically and carries no information. By Lemma~\ref{lem:distQ} again
$\delta=U_1$, so $\delta\ge2\iff U_1^{2}\ge4$. Therefore, on $\{N_2=0\}$,
$\Dom=\{U_1^{2}\ge4\}=\{Q\ge0\}\cap\{U_1^{2}\ge4\}$.
\end{proof}

\subsection{Homogenization}\label{subsec:homogQ}

Theorem~\ref{thm:domainQ} describes $\Dom$ by two polynomial inequalities in the entries of $g$,
and by Lemma~\ref{lem:quat} those entries are quadratic forms in a unit quaternion $q$. Two
polynomial inequalities in $q$ therefore describe $\Dom$. However, they hold only on the unit sphere
$\{|q|=1\}$, which is itself a constraint of the kind we are trying to eliminate.

Homogenization removes it. Every constant appearing in \eqref{eq:GammaQ}, \eqref{eq:UQ} and \eqref{eq:QQ} is
multiplied by the power of $|q|^2$ that makes each summand have the same degree. The resulting
polynomials agree with the originals when $|q|=1$, and are homogeneous, so their signs are
constant along rays through the origin. The condition can then be imposed on all of $\R^4$ at
once, and the unit sphere never appears again.

In the coordinates of Section~\ref{subsec:safezoneQ} the vectors $w$ and $d_2$ are read off from
the entries of $g$. Since $d_1=v_0=(1,0,0)$, the vector $d_2=gv_0$ is the first column of $g$ and
$a_2=2gu_0$ is twice the third, so
\[
d_2=(g_{11},g_{21},g_{31}),\qquad w=(2g_{13},\,2g_{23},\,2g_{33}-2),
\]
and therefore
\[
d_1\cdot d_2=g_{11},\qquad w\cdot d_1=2g_{13},\qquad
n=d_1\times d_2=(0,-g_{31},g_{21}),\qquad N_2=g_{21}^2+g_{31}^2 .
\]
For a unit quaternion $q$ with $g=R(q)$ each $g_{ij}$ is the integer quadratic form $R(q)_{ij}$ of
Section~\ref{subs:quaternionsBasic}, which we abbreviate $R_{ij}$. Set
\[
\mathbb W=\bigl(2R_{13},\;2R_{23},\;2R_{33}-2|q|^2\bigr),
\qquad
\mathbb D_2=\bigl(R_{11},\,R_{21},\,R_{31}\bigr),
\]
which are the homogenizations of $w$ and $d_2$; only the third entry of $w$ carries a constant, and
it becomes $-2|q|^2$. Note $\mathbb W\cdot d_1=2R_{13}$ and
\[
\mathbb W\cdot\mathbb D_2=2R_{13}R_{11}+2R_{23}R_{21}+\bigl(2R_{33}-2|q|^2\bigr)R_{31}.
\]

\begin{definition}\label{def:UhQh}
Define $U_{\mathrm h},Q_{\mathrm h}\in\Z[q_0,q_1,q_2,q_3]$ by
\begin{align*}
U_{\mathrm h}&=|\mathbb W|^2-4R_{13}^2-4|q|^4,\\[2pt]
Q_{\mathrm h}&=\bigl(|\mathbb W|^2-4|q|^4\bigr)\bigl(R_{21}^2+R_{31}^2\bigr)
 -\Bigl[\,|q|^4\bigl(2R_{13}\bigr)^2
   -2\,R_{11}\bigl(2R_{13}\bigr)\bigl(\mathbb W\cdot\mathbb D_2\bigr)
   +\bigl(\mathbb W\cdot\mathbb D_2\bigr)^2\Bigr],
\end{align*}
and let
\[
K=\bigl\{q\in\R^4\setminus\{0\}\ :\ Q_{\mathrm h}(q)\ge0\ \text{ and }\ U_{\mathrm h}(q)\ge0\bigr\}.
\]
\end{definition}

\begin{lemma}\label{lem:homogQ}
\leavevmode
\begin{enumerate}[leftmargin=2.2em,label=(\alph*)]
\item $U_{\mathrm h}$ and $Q_{\mathrm h}$ are homogeneous of degrees $4$ and $8$ respectively, with
integer coefficients.
\item For every unit quaternion $q$, with $g=R(q)$,
\[
U_{\mathrm h}(q)=U_1^2(g)-4\qquad\text{and}\qquad Q_{\mathrm h}(q)=Q(g).
\]
\item For $q\ne0$ and $\hat q=q/|q|$: $q\in K$ if and only if $R(\hat q)\in\Dom$. In particular
$K$ is a cone, invariant under $q\mapsto-q$, and
\[
\Dom=\bigl\{R(q)\ :\ q\in K,\ |q|=1\bigr\}.
\]
\item $U_{\mathrm h}$ and $Q_{\mathrm h}$ are the \emph{unique}
homogeneous polynomials of their degrees satisfying (b).
\end{enumerate}
\end{lemma}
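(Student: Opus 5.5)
The plan is to verify the four items in order, treating (a) and (b) as bookkeeping on the definitions and deriving (c) and (d) from homogeneity together with Theorem~\ref{thm:domainQ} and Lemma~\ref{lem:cover2}.

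For (a), every $R_{ij}$ and $|q|^2$ is an integer quadratic form by Lemma~\ref{lem:quat}. Hence the entries of $\mathbb W$ and $\mathbb D_2$ are integer quadratic forms, and $|\mathbb W|^2$, $R_{13}^2$ and $|q|^4$ are all homogeneous of degree $4$, so $U_{\mathrm h}$ is a homogeneous integer form of degree $4$. The same count applies to $Q_{\mathrm h}$. The first product has degree $4+4$, and in the bracket $|q|^4(2R_{13})^2$ has degree $4+4$. The term $R_{11}(2R_{13})(\mathbb W\cdot\mathbb D_2)$ has degree $2+2+4$, and $(\mathbb W\cdot\mathbb D_2)^2$ has degree $4+4$. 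So every summand has degree $8$. Each is a product and sum of integer polynomials, which gives integer coefficients.

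For (b), set $|q|=1$ and $g=R(q)$. Then $|q|^2=1$, so $\mathbb W=w$ and $\mathbb D_2=d_2$ by the coordinate formulas derived just before Definition~\ref{def:UhQh}. We also have $2R_{13}=w\cdot d_1$, $R_{11}=d_1\cdot d_2$, $\mathbb W\cdot\mathbb D_2=w\cdot d_2$ and $R_{21}^2+R_{31}^2=N_2$. Substituting into $U_{\mathrm h}$ gives $|w|^2-(w\cdot d_1)^2-4=U_1^2-4$ by \eqref{eq:UQ}. Substituting into $Q_{\mathrm h}$ gives $(|w|^2-4)N_2-\Gamma(w)$ by \eqref{eq:GammaQ}, which is $Q$ by \eqref{eq:QQ}. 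The only point needing care is matching the cross term of $\Gamma$ with the $R_{11}$ factor, with $c=d_1\cdot d_2=g_{11}$.

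For (c), take $q\neq0$. Homogeneity gives $U_{\mathrm h}(q)=|q|^4U_{\mathrm h}(\hat q)$ and $Q_{\mathrm h}(q)=|q|^8Q_{\mathrm h}(\hat q)$, so the signs at $q$ and at $\hat q$ agree. By (b), $q\in K$ if and only if $Q(R(\hat q))\ge0$ and $U_1^2(R(\hat q))\ge4$. By Theorem~\ref{thm:domainQ} this holds if and only if $R(\hat q)\in\Dom$. The set $K$ is a cone because the sign conditions are invariant under positive scaling, and it is invariant under $q\mapsto-q$ because both degrees are even. For the description of $\Dom$, the inclusion $\supseteq$ is immediate from the equivalence just proved. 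For $\subseteq$, any $g\in\Dom$ equals $R(q)$ for some unit $q$ by Lemma~\ref{lem:cover2}(b), and that $q$ lies in $K$ by the same equivalence.

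For (d), suppose $P$ is another homogeneous polynomial of degree $d$ ($d=4$ or $8$) satisfying (b). Then the difference $F=P-U_{\mathrm h}$ (respectively $P-Q_{\mathrm h}$) is homogeneous of degree $d$ and vanishes on the unit sphere. For any $q\neq0$ this gives $F(q)=|q|^dF(\hat q)=0$, so $F$ vanishes on $\R^4\setminus\{0\}$. By continuity $F$ vanishes on all of $\R^4$, and a polynomial vanishing on $\R^4$ is the zero polynomial. I expect no real obstacle in this lemma. The step most prone to error is the term-by-term identification in (b), especially the sign and placement of $R_{11}$ and of the factor $|q|^4$ attached to $(2R_{13})^2$.
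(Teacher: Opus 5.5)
Your proposal is correct and follows the paper's proof essentially step for step: a termwise degree count for (a), direct substitution at $|q|=1$ for (b), and for (c) homogeneity combined with Theorem~\ref{thm:domainQ} and Lemma~\ref{lem:cover2}(b). For (d) you use the same scaling argument $F(q)=|q|^dF(\hat q)$, and your explicit appeal to continuity and to the fact that a polynomial vanishing on all of $\R^4$ is zero simply spells out the paper's ``hence everywhere.''
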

\begin{proof}
(a) Each $R_{ij}$ and $|q|^2$ is a quadratic form with integer coefficients, so every entry of
$\mathbb W$ and of $\mathbb D_2$ is homogeneous of degree $2$, and $\mathbb W\cdot\mathbb D_2$ is
homogeneous of degree $4$. Counting degrees termwise, each summand of $U_{\mathrm h}$ has degree
$4$: $|\mathbb W|^2$, $4R_{13}^2$ and $4|q|^4$ all do. Likewise each summand of $Q_{\mathrm h}$ has
degree $8$: the first product is $4+4$, and inside the bracket the three terms have degrees
$4+4$, $2+2+4$ and $4+4$.

(b) Let $|q|=1$ and $g=R(q)$, which lies in $\SO(3)$ by Lemma~\ref{lem:quat}(a). Then $|q|^2=1$ and
$|q|^4=1$, so $\mathbb W=w$ and $\mathbb D_2=d_2$, and also $d_1\cdot d_2=R_{11}$,
$w\cdot d_1=2R_{13}$, $N_2=R_{21}^2+R_{31}^2$. Substituting into
Definition~\ref{def:UhQh} and comparing with \eqref{eq:GammaQ}, \eqref{eq:UQ}, and \eqref{eq:QQ}, we obtain
\[
U_{\mathrm h}(q)=|w|^2-(w\cdot d_1)^2-4=U_1^2-4,
\qquad
Q_{\mathrm h}(q)=\bigl(|w|^2-4\bigr)N_2-\Gamma(w)=Q .
\]

(c) By (a), $U_{\mathrm h}(q)=|q|^4U_{\mathrm h}(\hat q)$ and
$Q_{\mathrm h}(q)=|q|^8Q_{\mathrm h}(\hat q)$, and $|q|^4,|q|^8>0$, so $q\in K$ if and only if
$\hat q\in K$. By (b) applied to $\hat q$, this holds if and only if $U_1^2(R(\hat q))\ge4$ and
$Q(R(\hat q))\ge0$, which by Theorem~\ref{thm:domainQ} says exactly $R(\hat q)\in\Dom$. Both
degrees are even, so $K$ is invariant under $q\mapsto-q$; and the displayed description of $\Dom$
follows from Lemma~\ref{lem:cover2}(b).

(d)  If $h_1,h_2$ are homogeneous of the same degree $d$ and agree on the
unit sphere, then $h_1(q)=|q|^d h_1(\hat q)=|q|^d h_2(\hat q)=h_2(q)$ for
all $q\ne0$, hence everywhere. So (b) determines $U_{\mathrm h}$ and
$Q_{\mathrm h}$ uniquely.
\end{proof}

\begin{remark}\label{rem:homogTwice}
The same observation is used twice, and it is worth isolating. A homogeneous polynomial of even
degree has constant sign along each ray and is invariant under $q\mapsto-q$; the first property is
what allows a condition on the unit sphere to be imposed on all of $\R^4$, and the second is what
allows a polynomial in $q$ to define a function on $\SO(3)$, where $q$ and $-q$ represent the same
rotation. Hypothesis \ref{H:hom} below asks exactly for this, and $P$, $G$, $U_{\mathrm h}$ and
$Q_{\mathrm h}$ are homogeneous of even degrees $8$, $8$, $4$ and $8$ for the same reason.
\end{remark}

\subsection{Re-formulation of the problem in terms of polynomials}\label{subsec:ReFormulationPolynomials}

\begin{theorem}\label{thm:polyform}
Assume that for each $\ell\in\{1,2,3,4\}$:
\begin{itemize}[leftmargin=1.4em]
\item
      $D_\ell=\diag\bigl(w^\ell_1,\dots,w^\ell_{2\ell+1}\bigr)$ is a diagonal matrix with positive entries $w^\ell_1,\dots,w^\ell_{2\ell+1}$;
\item $T^\ell_{\mathrm h}$ is a $(2\ell+1)\times(2\ell+1)$ matrix whose entries are polynomials
      in $\R[q_0,q_1,q_2,q_3]$;
\item $C_\ell\in\R^{(2\ell+1)\times(2\ell+1)}$ is a symmetric matrix.
\end{itemize}
Define
\[
P(q)\;=\;\sum_{\ell=1}^{4}\bigl(|q|^{2}\bigr)^{4-\ell}
\sum_{i,j}\frac{(C_\ell)_{ij}}{w^\ell_j}\,T^\ell_{\mathrm h}(q)_{ij},
\qquad
G(q)\;=\;-\tfrac{19}{20}\,|q|^{8}-P(q).
\]
Suppose that
\begin{enumerate}[leftmargin=2.6em,label=(H\arabic*)]
\item\label{H:hom} every entry of $T^\ell_{\mathrm h}$ is homogeneous of degree $2\ell$;
\item\label{H:mult} $T^\ell_{\mathrm h}(p*q)=T^\ell_{\mathrm h}(p)\,T^\ell_{\mathrm h}(q)$
      holds in $\R[p_0,\dots,p_3,q_0,\dots,q_3]$;
\item\label{H:orth} $T^\ell_{\mathrm h}(q)^{\top}D_\ell\,T^\ell_{\mathrm h}(q)
      =|q|^{4\ell}D_\ell$ holds in $\R[q_0,\dots,q_3]$;
\item\label{H:psd} $C_\ell\succeq0$;
\item\label{H:trace} $\displaystyle\sum_{\ell=1}^{4}\sum_{i}\frac{(C_\ell)_{ii}}{w^\ell_i}
      \;\le\;\frac{23017}{4096}$;
\item\label{H:cover} $G(q)\ge0$ for every $q$ in the cone
      $K=\{q\in\R^4\setminus\{0\}:\ Q_{\mathrm h}(q)\ge0\ \text{and}\ U_{\mathrm h}(q)\ge0\}$.
\end{enumerate}
Then $N_{\max}\le6$, and consequently at most six pairwise non-overlapping infinite unit cylinders can
touch a unit ball.
\end{theorem}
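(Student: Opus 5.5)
The plan is to manufacture, out of the polynomial data, a certificate satisfying (i)--(ii) of Section~\ref{sec:DBSequence}, and then to apply Theorem~\ref{thm:dbBound} with $M=-\tfrac{19}{20}$. For $g\in\SO(3)$ we choose a unit quaternion $q$ with $R(q)=g$ (Lemma~\ref{lem:cover2}(b)) and set $T^\ell(g)=T^\ell_{\mathrm h}(q)$. This is well defined: by Lemma~\ref{lem:cover2}(a) the only other choice is $-q$, and by \ref{H:hom} each entry of $T^\ell_{\mathrm h}$ is homogeneous of even degree $2\ell$, so it is invariant under $q\mapsto -q$ (Remark~\ref{rem:homogTwice}).

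Next we verify (ii). Take $g_1=R(p)$ and $g_2=R(q)$ with $|p|=|q|=1$. Lemma~\ref{lem:quat}(b) gives $R(p*q)=g_1g_2$ and $|p*q|=1$, so $p*q$ is an admissible representative of $g_1g_2$. Then \ref{H:mult} yields $T^\ell(g_1g_2)=T^\ell(g_1)T^\ell(g_2)$. Evaluating \ref{H:orth} at a unit $q$ gives $T^\ell(g)^\top D_\ell T^\ell(g)=D_\ell$. The matrix $D_\ell$ is diagonal with positive entries, hence symmetric positive definite, which is (i). The matrices $C_\ell$ are symmetric by assumption and positive semidefinite by \ref{H:psd}. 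Lemma~\ref{lem:Torth} then gives $T^\ell(e)=I$, although we only need it inside Theorem~\ref{thm:dbBound}.

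Now we bound the Delsarte--Bochner supremum. Take $g\in\Dom$. By Lemma~\ref{lem:homogQ}(c) we have $g=R(q)$ for some $q\in K$ with $|q|=1$. Since $|q|=1$, all the factors $(|q|^2)^{4-\ell}$ equal $1$. Remark~\ref{rem:entrywise} then gives $P(q)=\sum_\ell\tr(D_\ell^{-1}C_\ell T^\ell(g))$. Hypothesis \ref{H:cover} gives $G(q)\ge0$, that is, $P(q)\le-\tfrac{19}{20}$. Hence $\dbsup(\overrightarrow C)\le-\tfrac{19}{20}=M<0$.

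Finally, by Remark~\ref{rem:entrywise} together with \ref{H:trace}, the trace sum in \eqref{eq:dbBound} is at most $\tfrac{23017}{4096}$. We then need the arithmetic check
\[
\frac{23017/4096}{19/20}=\frac{23017\cdot20}{4096\cdot19}=\frac{460340}{77824}\approx5.915<6 .
\]
The floor is therefore at most $5$, and Theorem~\ref{thm:dbBound} gives $N_{\max}\le6$. Section~\ref{sec:reduction} already showed that $N$ cylinders yield a non-overlapping set of $N$ elements, which gives the geometric conclusion.

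None of these steps is deep. The only place needing care is the well-definedness and multiplicativity of $T^\ell$ on $\SO(3)$ through the two-to-one cover; this is where \ref{H:hom} and Lemma~\ref{lem:quat}(b) must be combined correctly. We must also make sure the dehomogenizing factor $|q|^{2(4-\ell)}$ disappears on the unit sphere, so that $P$ really equals the pairing of Definition~\ref{def:DB}.
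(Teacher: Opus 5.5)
Your proposal is correct and follows the paper's proof essentially step for step. The shared steps are: well-definedness of $T^\ell$ through the two-to-one cover using \ref{H:hom}; verification of (i)--(ii) from \ref{H:mult}, \ref{H:orth} and Lemma~\ref{lem:quat}(b); identification of $P$ with the Delsarte--Bochner pairing on the unit sphere; \ref{H:cover} with Lemma~\ref{lem:homogQ}(c) to get $\dbsup(\overrightarrow C)\le-\tfrac{19}{20}$; and the same floor computation $\lfloor 460340/77824\rfloor=5$. The only cosmetic difference is that you take $q\in K$ directly from the displayed description of $\Dom$ in Lemma~\ref{lem:homogQ}(c). The paper instead takes an arbitrary unit representative of $g$ and uses the equivalence in (c) to place it in $K$.
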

\begin{proof} 
\emph{Step 1: the matrices $T^\ell_{\mathrm h}$ define maps on $\SO(3)$.}
Let $g\in\SO(3)$. By Lemma~\ref{lem:cover2}(b) there is a unit quaternion $q$ with $R(q)=g$, and by
Lemma~\ref{lem:cover2}(a) the only unit quaternions with this property are $q$ and $-q$. By
\ref{H:hom} every entry of $T^\ell_{\mathrm h}$ is homogeneous of degree $2\ell$, which is even, so
$T^\ell_{\mathrm h}(-q)=T^\ell_{\mathrm h}(q)$. Hence
\[
T^\ell(g)\;:=\;T^\ell_{\mathrm h}(q),\qquad q \text{ any unit quaternion with } R(q)=g,
\]
is a well-defined map $T^\ell:\SO(3)\to\R^{(2\ell+1)\times(2\ell+1)}$.

\emph{Step 2: hypotheses (i) and (ii) of Section~\ref{sec:DBSequence} hold.}
Each $D_\ell$ is diagonal with positive diagonal entries, hence symmetric positive definite, which
is (i). For the first half of (ii), let $g_1,g_2\in\SO(3)$ and choose unit quaternions $p,q$ with
$R(p)=g_1$ and $R(q)=g_2$. By Lemma~\ref{lem:quat}(b) we have $|p*q|^2=|p|^2|q|^2=1$ and
$R(p*q)=R(p)R(q)=g_1g_2$, so $p*q$ is a unit quaternion representing $g_1g_2$. Evaluating
\ref{H:mult} at $(p,q)$,
\[
T^\ell(g_1g_2)=T^\ell_{\mathrm h}(p*q)=T^\ell_{\mathrm h}(p)\,T^\ell_{\mathrm h}(q)
=T^\ell(g_1)\,T^\ell(g_2).
\]
For the second half, let $g\in\SO(3)$ and let $q$ be a unit quaternion with $R(q)=g$. Evaluating
\ref{H:orth} at $q$ and using $|q|=1$,
\[
T^\ell(g)^\top D_\ell\,T^\ell(g)=|q|^{4\ell}D_\ell=D_\ell .
\]

\emph{Step 3: $P$ computes the Delsarte--Bochner sum.}
Let $q$ be a unit quaternion and $g=R(q)$. The prefactors $(|q|^2)^{4-\ell}$ all equal $1$, and
$T^\ell_{\mathrm h}(q)=T^\ell(g)$ by Step 1, so by Remark~\ref{rem:entrywise} applied to the
diagonal matrix $D_\ell$,
\begin{equation}\label{eq:PisTrace}
P(q)=\sum_{\ell=1}^{4}\tr\!\Bigl(D_\ell^{-1}C_\ell\,T^\ell(g)\Bigr).
\end{equation}

\emph{Step 4: the cover hypothesis bounds the supremum.}
Let $g\in\Dom$. By Lemma~\ref{lem:cover2}(b) choose a unit quaternion $q$ with $R(q)=g$; by
Lemma~\ref{lem:homogQ}(c) we have $q\in K$. Hence \ref{H:cover} gives $G(q)\ge0$, that is,
$-\tfrac{19}{20}|q|^{8}-P(q)\ge0$, and $|q|=1$ turns this into $P(q)\le-\tfrac{19}{20}$. By
\eqref{eq:PisTrace},
\[
\sum_{\ell=1}^{4}\tr\!\Bigl(D_\ell^{-1}C_\ell\,T^\ell(g)\Bigr)\;\le\;-\tfrac{19}{20}
\qquad\text{for every }g\in\Dom ,
\]
and taking the supremum over $g\in\Dom$ gives
$\dbsup(\overrightarrow C)\le-\tfrac{19}{20}<0$ for $\overrightarrow C=(C_1,C_2,C_3,C_4)$.

\emph{Step 5: the bound.}
By \ref{H:psd} each $C_\ell$ is symmetric positive semidefinite, so
Theorem~\ref{thm:dbBound} applies with $M=-\tfrac{19}{20}$ and yields
\[
N_{\max}\;\le\;1+\left\lfloor\frac{\sum_{\ell=1}^{4}\tr\bigl(D_\ell^{-1}C_\ell\bigr)}
{19/20}\right\rfloor .
\]
By Remark~\ref{rem:entrywise} the trace sum equals $\sum_{\ell}\sum_i (C_\ell)_{ii}/w^\ell_i$,
which by \ref{H:trace} is at most $\tfrac{23017}{4096}$. Since $x\mapsto\lfloor x\rfloor$ is
non-decreasing,
\[
N_{\max}\;\le\;1+\left\lfloor\frac{23017}{4096}\cdot\frac{20}{19}\right\rfloor
\;=\;1+\left\lfloor\frac{460340}{77824}\right\rfloor\;=\;1+5\;=\;6.
\]

\emph{Step 6: back to cylinders.}
By Section~\ref{sec:reduction}, any $N$ pairwise non-overlapping unit cylinders touching the unit
ball give rise to a non-overlapping subset of $\SO(3)$ with $N$ elements, so
$N\le N_{\max}\le6$.
\end{proof}

 \begin{remark}\label{rem:noMoreSO3}
Theorem~\ref{thm:polyform} is the last statement in the proof that mentions $\SO(3)$, the safe
zone $\Dom$, cylinders, or distances between lines. Everything that follows is devoted to
exhibiting data satisfying \ref{H:hom}--\ref{H:cover}, and each of those six hypotheses is a
statement about polynomials in four real variables (or, in the case of \ref{H:mult}, in eight).
None of them refers to the unit sphere $|q|=1$.

\end{remark}

\section{The certificate}\label{sec:cert}

This section exhibits data satisfying the hypotheses of Theorem~\ref{thm:polyform}. For each $\ell=1,\dots,4$, there are three major components:
\begin{itemize}[leftmargin=1.4em]
\item positive rationals $w^\ell_1,\dots,w^\ell_{2\ell+1}$, forming the diagonal matrix
$D_\ell=\diag(w^\ell_1,\dots,w^\ell_{2\ell+1})$;
\item an integer table defining a $(2\ell+1)\times(2\ell+1)$ matrix $T^\ell_{\mathrm h}(q)$ of
polynomials in $q=(q_0,q_1,q_2,q_3)$, with coefficients in
$\tfrac1{\mathrm{Tden}_\ell}\Z$ for a deposited scale $\mathrm{Tden}_\ell\in\{1,2\}$
($\mathrm{Tden}_\ell=2$ only for $\ell=3$);
\item a symmetric rational matrix $C_\ell\in\Q^{(2\ell+1)\times(2\ell+1)}$, deposited together with
integer witnesses for its positive semidefiniteness.
\end{itemize}

\begin{remark}
No property of this data is assumed, and how it was found plays no role in the proof. It was
obtained by Gram--Schmidt orthogonalization of a basis of harmonic polynomials, as described in
Section~\ref{sec:motivation}; neither the reader nor the verifier needs this. The checker sees only
the deposited tables, and must therefore establish the properties below directly.
\end{remark}

The standing assumption of Theorem~\ref{thm:polyform} that each $w^\ell_i$ is positive is read off
from the deposited numerators and denominators. Hypotheses \ref{H:hom}, \ref{H:mult} and
\ref{H:orth} are finite computations on the tables, \ref{H:psd} follows from the deposited
witnesses by Proposition~\ref{prop:psd} below, \ref{H:trace} is a finite computation, and
\ref{H:cover} is the subject of Section~\ref{sec:cover}.

\subsection{The identities \ref{H:hom}--\ref{H:orth}}\label{subsec:maps}

Hypothesis \ref{H:hom} is an inspection of the exponents in the deposited table: every row of
$T^\ell_{\mathrm h}$ carries exponents summing to $2\ell$.

Both \ref{H:mult} and \ref{H:orth} are identities between polynomials with integer coefficients
once the deposited scales are cleared. Writing
$A^\ell=\mathrm{Tden}_\ell\,T^\ell_{\mathrm h}$, which has integer entries, and
$\widehat D_\ell=m_\ell D_\ell$, where $m_\ell$ is a common denominator of
$w^\ell_1,\dots,w^\ell_{2\ell+1}$, they read
\[
\mathrm{Tden}_\ell\,A^\ell(p*q)=A^\ell(p)\,A^\ell(q),
\qquad
A^\ell(q)^\top \widehat D_\ell\,A^\ell(q)=\mathrm{Tden}_\ell^{2}\,|q|^{4\ell}\,\widehat D_\ell ,
\]
so each amounts to comparing two polynomial expansions with integer coefficients, coefficient by
coefficient. No approximation and no choice of inner product enters.

\subsection{Positive semidefiniteness \ref{H:psd}}\label{subsec:C}

The matrices $C_1,\dots,C_4$ are given here as rational data; no motivation for their values is
needed, and their construction is sketched in Section~\ref{sec:motivation}. Their positive
semidefiniteness is not assumed. It follows from the deposited integer witnesses by the following
two elementary facts.

\begin{lemma}\label{lem:domPSD}
Let $E\in\R^{n\times n}$ be symmetric with
$E_{ii}\ge\sum_{j\ne i}|E_{ij}|$ for every $i$. Then $E\succeq0$.
\end{lemma}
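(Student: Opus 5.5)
We show directly that $x^\top E x\ge 0$ for every $x\in\R^n$; since $E$ is symmetric, this is the definition of $E\succeq0$. The plan is to expand the quadratic form, bound each off-diagonal term from below by a quantity symmetric in its two indices, and then absorb those quantities into the diagonal using the hypothesis.

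First write
\[
x^\top E x=\sum_i E_{ii}x_i^2+\sum_{i\ne j}E_{ij}x_ix_j .
\]
For each ordered pair $i\ne j$, the inequality $|x_ix_j|\le\tfrac12(x_i^2+x_j^2)$ gives
\[
E_{ij}x_ix_j\;\ge\;-|E_{ij}|\,|x_i|\,|x_j|\;\ge\;-\tfrac12|E_{ij}|\bigl(x_i^2+x_j^2\bigr).
\]

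Next sum this over all ordered pairs $i\ne j$. By symmetry $|E_{ij}|=|E_{ji}|$, so the two halves combine:
\[
\sum_{i\ne j}\tfrac12|E_{ij}|(x_i^2+x_j^2)=\sum_i x_i^2\sum_{j\ne i}|E_{ij}|.
\]
Combining this with the expansion yields
\[
x^\top E x\;\ge\;\sum_i x_i^2\Bigl(E_{ii}-\sum_{j\ne i}|E_{ij}|\Bigr)\;\ge\;0,
\]
where the last step uses the diagonal-dominance hypothesis.

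The only point requiring care is the symmetrization in the summation step. Using symmetry of $E$ there is what makes each row sum of $|E_{ij}|$ land exactly on its own diagonal entry $E_{ii}$, rather than a mixture of row and column sums. An equivalent route is Gershgorin's theorem: every eigenvalue of the symmetric matrix $E$ is real and lies in some interval $[E_{ii}-r_i,\,E_{ii}+r_i]$ with $r_i=\sum_{j\ne i}|E_{ij}|$, and that interval lies in $[0,\infty)$. The direct quadratic-form argument above avoids any appeal to eigenvalues.
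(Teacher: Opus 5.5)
Your proof is correct and follows the paper's argument: both use $|x_ix_j|\le\tfrac12(x_i^2+x_j^2)$ on the off-diagonal terms, use the symmetry of $E$ to collapse $\sum_{i\ne j}\tfrac12|E_{ij}|(x_i^2+x_j^2)$ into $\sum_i x_i^2\sum_{j\ne i}|E_{ij}|$, and absorb this into the diagonal by the dominance hypothesis. The only difference is cosmetic: the paper bounds the absolute value of the off-diagonal sum, while you bound each term from below.
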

\begin{proof}
Let $v\in\R^{n}$. Using $|v_iv_j|\le\tfrac12(v_i^{2}+v_j^{2})$ and then the symmetry of $E$,
\[
\Bigl|\sum_{i\ne j}E_{ij}v_iv_j\Bigr|
\;\le\;\sum_{i\ne j}|E_{ij}|\,\tfrac12\bigl(v_i^{2}+v_j^{2}\bigr)
\;=\;\sum_{i}v_i^{2}\sum_{j\ne i}|E_{ij}|
\;\le\;\sum_{i}E_{ii}v_i^{2},
\]
so $v^{\top}Ev=\sum_i E_{ii}v_i^{2}+\sum_{i\ne j}E_{ij}v_iv_j\ge0$.
\end{proof}

\begin{proposition}\label{prop:psd}
Write $\widehat C_\ell:=2^{20}C_\ell$ for the deposited integer matrix, and let $S_\ell$ and
$W_\ell$ be the deposited positive integer scalar and integer matrix. If
\[
E_\ell\;=\;S_\ell\,\widehat C_\ell\;-\;W_\ell^{\top}W_\ell
\]
is symmetric and diagonally dominant, in the sense that
$(E_\ell)_{ii}\ge\sum_{j\ne i}|(E_\ell)_{ij}|$ for every $i$, then $C_\ell$ is symmetric positive
semidefinite.
\end{proposition}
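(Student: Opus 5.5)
The plan is to combine Lemma~\ref{lem:domPSD} with the observation that adding a Gram matrix preserves positive semidefiniteness, and then rescale. Since $E_\ell$ is assumed symmetric and diagonally dominant, Lemma~\ref{lem:domPSD} applies directly and gives $E_\ell\succeq0$.

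Next I will rewrite the definition of $E_\ell$ as
\[
S_\ell\,\widehat C_\ell \;=\; E_\ell + W_\ell^{\top}W_\ell .
\]
The matrix $W_\ell^{\top}W_\ell$ is symmetric, and for every $v$ we have $v^\top W_\ell^\top W_\ell v=|W_\ell v|^2\ge0$, so it is positive semidefinite. Hence $S_\ell\widehat C_\ell$ is a sum of two symmetric positive-semidefinite matrices. It is therefore symmetric, and $v^\top S_\ell\widehat C_\ell v\ge0$ for all $v$.

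Finally, $C_\ell = 2^{-20}S_\ell^{-1}\,(S_\ell\widehat C_\ell)$, where $2^{-20}S_\ell^{-1}>0$ because $S_\ell$ is a positive integer. Multiplying by a positive scalar preserves both symmetry and the sign of every quadratic form, so $C_\ell$ is symmetric and $C_\ell\succeq0$.

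There is no real obstacle here. The only point to get right is that the symmetry of $C_\ell$ comes from the symmetry of $E_\ell$ and of $W_\ell^\top W_\ell$, and that the scalar $S_\ell$ is strictly positive, not merely nonzero. In practice the checker must confirm the symmetry of $E_\ell$ (equivalently of $\widehat C_\ell$) as part of the hypothesis.
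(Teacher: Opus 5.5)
Your proof is correct and is essentially the paper's argument: Lemma~\ref{lem:domPSD} gives $E_\ell\succeq0$, the Gram matrix $W_\ell^{\top}W_\ell$ is positive semidefinite, and dividing $S_\ell\widehat C_\ell=E_\ell+W_\ell^{\top}W_\ell$ by the positive scalar $2^{20}S_\ell$ yields $C_\ell\succeq0$. The one cosmetic difference is that you derive the symmetry of $C_\ell$ from that of $E_\ell$ and $W_\ell^{\top}W_\ell$, whereas the paper reads it off the deposited $\widehat C_\ell$; the two are equivalent.
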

\begin{proof}
By Lemma~\ref{lem:domPSD}, $E_\ell\succeq0$. Also $W_\ell^{\top}W_\ell\succeq0$, since
$v^{\top}W_\ell^{\top}W_\ell v=|W_\ell v|^{2}\ge0$ for every $v$. Hence
\[
2^{20}S_\ell\,C_\ell\;=\;S_\ell\,\widehat C_\ell\;=\;E_\ell+W_\ell^{\top}W_\ell\;\succeq\;0,
\]
and dividing by the positive scalar $2^{20}S_\ell$ gives $C_\ell\succeq0$. Symmetry of $C_\ell$ is
symmetry of the deposited $\widehat C_\ell$.
\end{proof}

For the deposited data $S_\ell=2^{26}$, and every entry of $S_\ell\widehat C_\ell$ and of
$W_\ell^{\top}W_\ell$ is an integer of absolute value below $2^{46}$, so the hypothesis of
Proposition~\ref{prop:psd} is a check in exact $64$-bit integer arithmetic. Note that diagonal
dominance forces $(E_\ell)_{ii}\ge0$, the right-hand side being a sum of absolute values.

\subsection{The trace sum \ref{H:trace}}\label{subsec:trace}

For the deposited data the trace sum is not merely bounded by $\tfrac{23017}{4096}$ but equal to it:
\begin{equation}\label{eq:trsumcheck}
\sum_{\ell=1}^{4}\tr\bigl(D_\ell^{-1}C_\ell\bigr)
=\sum_{\ell=1}^{4}\sum_{i=1}^{2\ell+1}\frac{(C_\ell)_{ii}}{w^\ell_i}=\frac{23017}{4096},
\end{equation}
which is a finite computation with rational numbers. In particular \ref{H:trace} holds.

\subsection{The certificate polynomial}\label{subsec:P}

The polynomial $P$ of Theorem~\ref{thm:polyform}, formed from the deposited $C_\ell$, the tables
$T^\ell_{\mathrm h}$ and the weights, has rational coefficients and is homogeneous of degree $8$:
by \ref{H:hom} each $T^\ell_{\mathrm h}(q)_{ij}$ is homogeneous of degree $2\ell$, and
$2(4-\ell)+2\ell=8$ for every $\ell$. The same holds for $G=-\tfrac{19}{20}|q|^8-P$.

\begin{remark}\label{rem:deposited}
Clearing denominators makes $G$ integral. Put $G_{\mathrm{den}}=40\cdot2^{20}=41943040$; then all
$35$ coefficients of $G_{\mathrm{den}}\,G$ are integers of absolute value $<2^{32}$. A fingerprint
identifies the polynomial: since $R(1,0,0,0)$ is the identity, $P(1,0,0,0)$ is the trace sum
\eqref{eq:trsumcheck}, so the coefficient of $q_0^{8}$ in $G_{\mathrm{den}}G$ equals
\[
-\Bigl(\tfrac{19}{20}+\tfrac{23017}{4096}\Bigr)G_{\mathrm{den}}=-275539968 .
\]
Any implementation that reconstructs $G$ from the deposited data must reproduce these $35$
integers.
\end{remark}

\section{A computer-assisted proof of hypothesis (H6)}\label{sec:cover}

By Theorem~\ref{thm:polyform} and Section~\ref{sec:cert}, the proof of Theorem~\ref{thm:main} is
complete once \ref{H:cover} is established for the deposited certificate, that is, once we show
\begin{equation}\label{eq:gcone}
G(q)\ \ge\ 0\qquad\text{on}\qquad
K=\{q\ne0:\ Q_{\mathrm h}(q)\ge0\ \text{and}\ U_{\mathrm h}(q)\ge0\}.
\end{equation}
Because $G$, $Q_{\mathrm h}$ and $U_{\mathrm h}$ are homogeneous and even in
$q$, it suffices to certify \eqref{eq:gcone} on the four cube-faces
$\{q_i=1,\ |q_j|\le1\ (j\ne i)\}$, $i=0,\dots,3$, whose rays cover all of $\R^4\setminus\{0\}$ up
to $q\mapsto-q$.

\subsection{Algorithm}
For each $i\in\{0,1,2,3\}$, we make a separate problem. Set $q_i=1$, and re-write
$G$, $Q_{\mathrm h}$, and $U_{\mathrm h}$ as polynomials in three variables. In each problem, we can re-label these variables and call them
$x$, $y$, and $z$. The theorem will be proved if
we manage to partition $[-1,1]^3$ into finitely many boxes of the form $[L_x,R_x]\times[L_y,R_y]\times [L_z,R_z]$, where none of the open intervals
$(L_x,R_x)$, $(L_y,R_y)$, $(L_z,R_z)$ contains $0$, and to prove that on each box at least
one of the following three inequalities is satisfied:
\begin{align}
U_{\mathrm h}(x,y,z)&< 0,\quad Q_{\mathrm h}(x,y,z)<0,\quad\text{or}\quad G(x,y,z)\geq 0.
\label{eqn:threeInequalities}
\end{align}
This is a technique for proving that a polynomial $F(x,y,z)$ has a constant sign on $[L_x,R_x]\times[L_y,R_y]\times [L_z,R_z]$.
 First, split $F$ into a sum of monomials \begin{align*}F(x,y,z)&=\sum_j \gamma_j x^{a_j}y^{b_j}z^{c_j}.
\end{align*}
Then, for each $j$, each of the maximum $M_j$ and the minimum $m_j$ of the monomial $\gamma_jx^{a_j}y^{b_j}z^{c_j}$ is attained at a corner of the box. This is due to the fact that each factor is monotone on its interval, because $0$ is not interior to it. Hence, $m_j$ and $M_j$ can be computed by hand, or by a computer. Define $M_P=\sum_jM_j$ and $m_P=\sum_jm_j$. If it happens that $M_P<0$, then we can guarantee that $F(x,y,z)<0$ on the box $[L_x,R_x]\times[L_y,R_y]\times [L_z,R_z]$. Similarly, if it happens that
$m_P\geq 0$, then we are allowed to conclude that $P(x,y,z)\geq 0$.

None of this requires rounding. After the scaling of Remark~\ref{rem:deposited} the coefficients
$\gamma_j$ are integers, and every corner produced by repeated halving has the form $k/2^{d}$ with
$k,d$ integers, so each $M_j$ and $m_j$, and hence $M_P$ and $m_P$, is a rational number whose
denominator is a power of two. Both tests ask only for a sign, and a sign is unchanged by
multiplication by a positive number: multiplying through by $2^{8d}$, the total degree being $8$,
turns each of them into a comparison between two integers. At the depths reached here those
integers stay below $2^{120}$.

Our program takes exactly that route. Every quantity it forms is a dyadic rational,
stored as a $128$-bit integer numerator together with the exponent of the power of two in the
denominator. The denominator itself is never formed, so its size places no restriction on the depth
of the subdivision. Sums, products and comparisons of such numbers are carried out exactly, and
every arithmetic operation is guarded by an overflow test. The run reported below completes with no
overflow signaled. Consequently every sign decided during the run is the exact sign of the
quantity in question, and no floating-point number occurs anywhere in the computation.

One refinement keeps the number of boxes manageable: rather than a uniform grid, we subdivide adaptively.
We first split $[-1,1]^3$ into $8$ boxes by midpoints, after which $0$ is always an endpoint
and never interior. Then for each box we try to see if one of \eqref{eqn:threeInequalities} is satisfied. If it is, we say that the box is certified. If it is not, then we split the box into 8 equally sized smaller boxes. We keep going until the number of uncertified boxes becomes $0$. Nothing guarantees in advance that this happens: if some box could never be certified, the splitting would continue forever and we would have proved nothing. That it does stop is the whole content of the computation. In our case, the process was finite.

The above algorithm with target $-\tfrac{19}{20}$ terminates. About
$3.4$ million boxes are tested in all: $1\,766\,376$ of them were settled due to $G\geq 0$, $1\,188\,608$ had $U_{\mathrm h}<0$  or $Q_{\mathrm h}<0$, and the rest were split further. No box remains, and none fails. The boxes that are
never split are the $2\,954\,984$ cases referred to in the abstract; they are what the four
partitions consist of, and each is settled by one of the three inequalities
\eqref{eqn:threeInequalities}. Of the $1\,188\,608$ boxes for which $U_{\mathrm h}<0$  or $Q_{\mathrm h}<0$, $96\,364$ had 
$U_{\mathrm h}<0$ and $1\,092\,244$ had $Q_{\mathrm h}<0$. Therefore, almost all of the decisions were made
by $Q_{\mathrm h}$. The inequality $U_{\mathrm h}<0$ earns its place near the parallel locus, where 
$Q_{\mathrm h}$ vanishes identically and can decide nothing.

The subdivision does not go deep: the smallest boxes produced have half-width $2^{-10}$.  

Hypotheses 
\ref{H:hom}--\ref{H:cover} are established for the deposited certificate, so Theorem~\ref{thm:polyform} gives 
$N_{\max}\leq6$, and with it Theorem~\ref{thm:main}.

\section{Where the certificate comes from}\label{sec:motivation}

Nothing in this section is used anywhere in the proof. It is included because the data of
Section \ref{sec:cert} would otherwise arrive with no explanation at all, and because a reader who wants
to construct a certificate of their own will need it. A reader who only wants to check the theorem
may stop at Section \ref{sec:cover}.

\subsection{The bound}
By Section \ref{sec:reduction}, a configuration of non-overlapping touching cylinders is an independent
set in the graph on $\SO(3)$ in which $g$ and $g'$ are adjacent when $g^{-1}g'\notin\Dom$. Bounding
independence numbers of graphs invariant under a group action is the subject of Delsarte's linear
programming method and of its semidefinite refinement; on a compact group the objects that produce
such bounds are the functions of positive type. Theorem~\ref{thm:dbBound} is one such bound,
stated in the narrow form we need so that it can be proved in half a page. The general framework,
and its use for packing problems on spheres, is that of Bachoc and
Vallentin~\cite{BV}.

\subsection{Why sums of traces of representations}
The irreducible real representations of $\SO(3)$ are its actions on the spaces $H_\ell$ of
harmonic polynomials on $\R^3$, homogeneous of degree $\ell$, of dimension $2\ell+1$. Bochner's
theorem for compact groups identifies the continuous functions of positive type on $\SO(3)$ as the
sums
\[
g\;\longmapsto\;\sum_{\ell\ge0}\tr\bigl(A_\ell\,\pi^\ell(g)\bigr),
\qquad A_\ell\succeq0,
\]
over those representations $\pi^\ell$. Truncating the sum at $\ell\le L$ turns the search for a
good bound into a finite semidefinite program in the matrices $A_\ell$. The maps
$T^\ell_{\mathrm h}$ of Section \ref{subsec:maps} are these representations written in an explicit
basis, and the matrices $C_\ell$ are the corresponding $A_\ell$. Theorem~\ref{thm:dbBound} itself
uses none of this: it assumes only the two identities (i)--(ii) of Section \ref{sec:DBSequence}, which is
why those, and not irreducibility or continuity, are what Section \ref{sec:cert} verifies.

\subsection{The basis, and why the data is rational}
Fix $\ell$ and take any basis of $H_\ell$ with integer coefficients. Apply Gram--Schmidt with
respect to
\[
\langle p,r\rangle\;=\;\frac1{4\pi}\int_{\Sph^2}p\,r\,d\sigma .
\]
Every such inner product is rational, due to 
\begin{align*}
\frac1{4\pi}\int_{\Sph^2}x^ay^bz^c\,d\sigma&=
\left\{\begin{array}{ll}
\frac{(a-1)!!\,(b-1)!!\,(c-1)!!}{(a+b+c+1)!!}, & a,b,c \text{ all even},\\
0,& \text{otherwise},
\end{array}\right. 
\end{align*}
which is easy to prove (this is a standard integral and can be calculated using spherical coordinates).
Hence, the process never leaves
$\Q$. Clearing denominators gives integer polynomials $h^\ell_1,\dots,h^\ell_{2\ell+1}$. The Gram
matrix is then diagonal by construction, and it is the matrix $D_\ell$ of Section \ref{sec:cert}:
$\langle h^\ell_i,h^\ell_j\rangle=\delta_{ij}w^\ell_i$ with $w^\ell_i>0$.

Since $\SO(3)$ acts on $H_\ell$, for each $g=R(q)$ there are unique coefficients with
\begin{align}
h^\ell_j\bigl(R(q)^\top x\bigr)&=\sum_{i=1}^{2\ell+1}T^\ell_{\mathrm h}(q)_{ij}\,h^\ell_i(x),
\label{eq:defid}
\end{align}
and these are the deposited tables. From \eqref{eq:defid} the three properties
\ref{H:hom}--\ref{H:orth} are immediate: homogeneity of degree $2\ell$ is a degree count, since the
entries of $R(q)$ are quadratic in $q$; \ref{H:mult} is the composition law of the action; and
\ref{H:orth} is the invariance of $\sigma$ under $\SO(3)$, which makes each $g$ act by an isometry
of $\langle\cdot,\cdot\rangle$. We stress that the proof does not proceed this way. The harmonic
polynomials $h^\ell_i$ are not deposited and the program never sees them. Section \ref{sec:cert}
establishes \ref{H:hom}--\ref{H:orth} directly from the tables, so no property of the construction
above has to be trusted.

\subsection{The matrices $C_\ell$}
The $C_\ell$ play the role of the matrices $A_\ell$ above: they are the free variables of the
semidefinite program, and the bound that Theorem~\ref{thm:dbBound} returns depends on them through
the ratio of $\dbsup(\overrightarrow C)$ to the trace sum $\sum_\ell\tr(D_\ell^{-1}C_\ell)$. The
deposited values are dyadic, with denominators dividing $2^{20}$.

A numerical solution of a semidefinite program is not usable as it stands, and rounding a feasible
point need not leave it feasible. The paper therefore takes nothing at all from the computation
that produced these matrices: positive semidefiniteness is re-established on the rounded values by
the integer witnesses of Proposition~\ref{prop:psd}, the trace sum by \eqref{eq:trsumcheck}, and the bound on $\Dom$ by
the computation of Section \ref{sec:cover}.

\subsection{The two constants}
The truncation $L=4$ and the threshold $M=-\tfrac{19}{20}$ are the only arbitrary-looking choices
in Section \ref{sec:Strategy}, and neither is delicate.

Larger $L$ gives a richer family of positive-type functions and hence a bound at least as good;
$L=4$ is what we found sufficient.

For $M$ there is a whole interval of admissible values. Any $M$ with
$\dbsup(\overrightarrow C)\le M<0$ may be used, and by \eqref{eq:dbBound} the resulting bound is
$6$ as soon as $\tfrac{23017}{4096}\big/(-M)<6$, that is, as soon as
$-M>\tfrac{23017}{24576}=0.93656\ldots$. On the other side, $M$ must be at least
$\dbsup(\overrightarrow C)$. The rational point $q^{*}=\bigl(1,\tfrac{2365}{4096},0,0\bigr)$ lies
in the cone $K$ of Definition~\ref{def:UhQh}. The following are satisfied:
$U_{\mathrm h}(q^{*})=55008914419/2^{46}>0$ and $Q_{\mathrm h}(q^{*})=0$. The exact 
rational evaluation gives
$P(q^{*})/|q^{*}|^{8}=-0.98406\ldots$, so
$\dbsup(\overrightarrow C)\ge-0.98406\ldots$ and no $M$ below that value can be used.
Numerical calculations show $\dbsup(\overrightarrow C)\approx-0.9838$. So any
\[
M\in\bigl[\,\dbsup(\overrightarrow C),\;-0.93656\ldots\,\bigr)
\qquad\text{with }\dbsup(\overrightarrow C)\approx-0.9838,
\]
yields the theorem, and $-\tfrac{19}{20}=-0.95$ is a convenient rational lying inside it with room
on both sides. The cover of Section \ref{sec:cover} certifies the left-hand requirement;
the arithmetic in the proof of Theorem~\ref{thm:polyform} certifies the right-hand one.

\appendix

\section{Deposited data and standalone verifier}\label{app:data}

The certificate is in a single plain-text file in a self-describing format, designed to
be read by a short standalone parser. Each record consists of a key, written
\texttt{[key]Name[/key]}, followed by a value block \texttt{[value]}\,$\dots$\,\texttt{[/value]}.
A value block holds one or more rows. Each row, and each entry within a row, is delimited by
\texttt{\{} and \texttt{\}}. Every value is therefore a nested list of integers of depth
two, including those that carry a single number, which appear as one row with one entry. Every key
ends in a four-digit index $000\ell$.

The certificate consists of $28$ records, seven for each $\ell=1,\dots,4$:
\begin{itemize}[leftmargin=1.4em]
\item \texttt{WNum000$\ell$} and \texttt{WDen000$\ell$}: the weights $w^\ell_i$ as
numerator/denominator pairs, forming \[D_\ell=\diag(w^\ell_1,\dots,w^\ell_{2\ell+1});\]
\item \texttt{Tpoly000$\ell$} together with the scale \texttt{Tden000$\ell$}. Each row has 7 numbers
$i$, $j$, $e_0$, $e_1$, $e_2$, $e_3$, $c$. These seven numbers mean that the polynomial $T^\ell_{\mathrm h}(q)_{ij}$
contains the term  
$(c/\mathrm{Tden}_\ell)\,q_0^{e_0}q_1^{e_1}q_2^{e_2}q_3^{e_3}$. The polynomial $T^\ell_{\mathrm h}(q)_{ij}$ is the sum 
over the rows carrying that $i,j$;
\item \texttt{Cscaled000$\ell$} $=2^{20}C_\ell$, as the rows of an integer matrix;
\item \texttt{Swit000$\ell$} and \texttt{Wwit000$\ell$}: the integer scalar $S_\ell$ and the
integer matrix $W_\ell$ of Proposition~\ref{prop:psd}.
\end{itemize}

Nothing further is deposited. Every other quantity in the proof is derived from these $28$
records: the reciprocals $1/w^\ell_i$ from \texttt{WNum}/\texttt{WDen}; the certificate polynomial
$P$ of Theorem~\ref{thm:polyform}, and hence the cover polynomial $G=-\frac{19}{20}|q|^8-P$, from the
$C_\ell$, the $T^\ell_{\mathrm h}$ tables and the weights; and $U_{\mathrm h}$, $Q_{\mathrm h}$
from the matrix $R(q)$ of Section \ref{sec:quat} by the formulas of Definition~\ref{def:UhQh}. The two
rational constants of the argument, the threshold $-\tfrac{19}{20}$ and the trace sum
$\tfrac{23017}{4096}$, are those of Section \ref{sec:Strategy} and Theorem~\ref{thm:polyform}.

On these records the standalone verifier confirms:
\begin{itemize}[leftmargin=1.4em]
\item \ref{H:hom} and the positivity of the weights, by inspection of the exponents in
\texttt{Tpoly000$\ell$} and of the signs in \texttt{WNum000$\ell$}, \texttt{WDen000$\ell$};
\item \ref{H:mult} and \ref{H:orth}, by expanding both sides of the two identities of
Section~\ref{subsec:maps} and comparing them coefficient by coefficient;
\item the hypothesis of Proposition~\ref{prop:psd}, forming
$E_\ell=S_\ell\bigl(2^{20}C_\ell\bigr)-W_\ell^{\top}W_\ell$ and testing symmetry and diagonal
dominance; every intermediate product is an integer of absolute value below $2^{46}$, so this runs
in $64$-bit integer arithmetic;
\item \ref{H:trace}, by evaluating $\sum_{\ell}\sum_i (C_\ell)_{ii}/w^\ell_i$ and comparing it with
$\tfrac{23017}{4096}$;
\item \ref{H:cover}, by the subdivision of Section~\ref{sec:cover}, reproducing the counts reported
there: every box certified, and no failures.
\end{itemize}

Every one of these checks is exact. The verifier uses no floating-point arithmetic. Integers are
held in $64$- or $128$-bit machine words. For rational numbers, we use two representations. If the number is not dyadic, then we 
store the numerator and denominator. In the dyadic case the denominator is a power of two, and we keep only its exponent. Addition, multiplication and comparison are implemented so that no rounding can occur, and every
operation is guarded by an overflow test. A run in which any such test fires would report the fact and
discard the output. All of the tests were successful.


\begin{thebibliography}{99}
\bibitem{BV} C.~Bachoc and F.~Vallentin, \emph{New upper bounds for kissing numbers from semidefinite
programming}, J.\ Amer.\ Math.\ Soc.\ \textbf{21} (2008), 909--924.
\bibitem{BW} P.~Bra\ss{} and C.~Wenk, \emph{On the number of cylinders touching a ball}, Geom.\
Dedicata \textbf{81} (2000), 281--284.
\bibitem{F} M.~Firsching, \emph{Optimization methods in discrete geometry}, PhD thesis, FU Berlin
(2016).
\bibitem{HS} A.~Heppes and L.~Szab\'o, \emph{On the number of cylinders touching a ball}, Geom.\
Dedicata \textbf{40} (1991), 111--116.
\bibitem{K} W.~Kuperberg, \emph{How many unit cylinders can touch a unit ball?}, Problem 3.3, DIMACS
Workshop on Polytopes and Convex Sets, Rutgers University (1990).
\bibitem{MRCode2026} I.~Mati\'c, R.~Radoi\v ci\'c, \emph{Source code and printout of cases for
a computer-assisted proof of Kuperberg's six-cylinder conjecture},
\verb+https://github.com/maticivan/cyl+ (2026).
\bibitem{OS} O.~Ogievetsky and S.~Shlosman, \emph{The six cylinders problem: $\mathbb{D}_3$-symmetry
approach}, arXiv:1805.09833; \emph{Extremal cylinder configurations}, arXiv:1812.09543,
arXiv:1902.08995.
\bibitem{SW} K.~Sch\"utte and B.~L.~van der Waerden, \emph{Das Problem der dreizehn Kugeln}, Math.\
Ann.\ \textbf{125} (1953), 325--334.
\bibitem{Yar} O.~Yardimci, \emph{On the number of cylinders touching a sphere}, PhD dissertation,
Auburn University (2019); the seven-cylinder radius bound is joint with A.~Bezdek.
\end{thebibliography}
\end{document}